\documentclass[12pt]{amsart}

\newtheorem{theorem}{Theorem}[section]
\newtheorem{lemma}[theorem]{Lemma}
\newtheorem{corollary}[theorem]{Corollary}

\newtheorem{conjecture}[theorem]{Conjecture}

\theoremstyle{plain}
\newtheorem{definition}[theorem]{Definition}

\newtheorem{remark}[theorem]{Remark}

\newtheorem{idea}[theorem]{Idea}
\newtheorem{setting}[theorem]{Setting}

\theoremstyle{definition}

\theoremstyle{remark}

\numberwithin{equation}{section}

\usepackage{fancyhdr}
\usepackage{amscd}
\usepackage{amsmath}
\usepackage{amsthm}
\usepackage{amssymb}

\begin{document}

\title[K-theory and obstructions]{Chern character and obstructions to deforming cycles}

\author{Sen Yang }

\address{Shing-Tung Yau Center of Southeast University \\ 
Southeast University \\
Nanjing, China\\
}
\address{School of Mathematics \\  Southeast University \\
Nanjing, China\\
}
\email{101012424@seu.edu.cn}

\subjclass[2010]{14C25}
\date{}

\maketitle

\begin{abstract}
Following Bloch-Esnault-Kerz and Green-Griffiths' recent works on deformation of algebraic cycle classes, we use Chern character from K-theory to negative cyclic homology to show how to eliminate obstructions to deforming cycles.
\end{abstract}


\section{Introduction}
The main purpose of this paper is to study obstruction issues in deforming cycles and show how to eliminate obstructions to deforming cycles. To motivate the discussion, we recall the infinitesimal Hodge conjecture.

Let $\mathcal{X}/S$ be a smooth projective scheme, where $S = \mathrm{Spec}(k[[t]])$ with $k$ a field of characteristic zero. For each integer $n \ge 0$, we write $S_n = \mathrm{Spec}\, k[t]/t^{n+1}$ and write $X_n=\mathcal{X} \times _{S} S_n$. Let $K_0 (X_{0})$ and $ H^* _{dR} (X_{0}/k)$ denote the Grothendieck group and de Rham cohomology respectively. There exists a Chern character ring homomorphism
\[
\mathrm{ch}:K_0 (X_{0}) \to H^* _{dR} (X_{0}/k).
\]

For an element $\xi _0 \in  K_0 (X_0)_{\mathbb{Q}}$, we are interested in lifting $\mathrm{ch}(\xi _0) \in H^* _{dR} (X_0/k)$ to $ \mathrm{ch}(\xi ) \in H^* _{dR} (\mathcal{X}/S)$ in the sense that 
\begin{equation*}
\mathrm{ch}(\xi\mid _{X_0})  = \mathrm{ch}(\xi _0)  \in H^* _{dR} (X_0/k),
\end{equation*}
where $\xi \in  K_0 (\mathcal{X}) _{\mathbb{Q}}$.

Let $\nabla: H^{*} _{dR} (\mathcal{X}/S) \to H^{*} _{dR} (\mathcal{X}/S)$ denote the derivation in the parameter $t$ given by the Gauss-Manin connection. There exists a canonical isomorphism
\[
\Phi :  H^{*} _{dR} (\mathcal{X}/S) ^{\nabla} \overset{\sim}{\longrightarrow} H^{*} _{dR} (X_0/k),
\]
 where $H^{*} _{dR} (\mathcal{X}/S) ^{\nabla}$ is the kernel of $\nabla$.

The infinitesimal Hodge conjecture predicts that
\begin{conjecture} [see Conjecture 1.4 of \cite{BEK2}]
\label{C:infHodge}
The following statements are equivalent for an element 
$\xi _0 \in K_0 (X_0)_{\mathbb{Q}}$:
\begin{itemize}
\item [1.] $\Phi ^{-1} \circ \mathrm{ch}(\xi _0) \in  \bigoplus _i H^{2i} _{dR} (\mathcal{X}/S) ^{\nabla}\cap F^i H^{2i} _{dR} (\mathcal{X}/S)$, where $F^i H^{2i} _{dR}$ denotes the Hodge filtration of de Rham cohomology;
\item[2.] there is an element 
$ \xi \in  K_0 (\mathcal{X}) _{\mathbb{Q}}$ such that 
\[
\mathrm{ch}(\xi \mid _{X_0})  = \mathrm{ch}(\xi _0)  \in H^* _{dR} (X_0/k).
\]
\end{itemize}
\end{conjecture}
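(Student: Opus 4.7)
The plan is to establish the two implications separately. The direction $(2) \Rightarrow (1)$ is the formal one: given a lift $\xi \in K_0(\mathcal{X})_{\mathbb{Q}}$, its de Rham Chern character $\mathrm{ch}(\xi) \in H^*_{dR}(\mathcal{X}/S)$ is horizontal for the Gauss-Manin connection (a general property of Chern characters of algebraic classes in a family), so it lies in $H^*_{dR}(\mathcal{X}/S)^{\nabla}$ and corresponds under $\Phi$ to $\mathrm{ch}(\xi_0)$. That $\mathrm{ch}(\xi)$ respects the Hodge filtration $F^i H^{2i}_{dR}$ follows from the fact that algebraic Chern characters land in the Hodge filtration, by the usual compatibility of the Hodge-to-de Rham spectral sequence with Chern class maps.

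For the harder direction $(1) \Rightarrow (2)$, I would work order by order along the thickenings $X_0 \subset X_1 \subset X_2 \subset \cdots$. Inductively, suppose one has constructed $\xi_n \in K_0(X_n)_{\mathbb{Q}}$ whose Chern character agrees with the reduction of $\Phi^{-1}(\mathrm{ch}(\xi_0))$ modulo $t^{n+1}$. The obstruction to extending $\xi_n$ to $\xi_{n+1} \in K_0(X_{n+1})_{\mathbb{Q}}$ lives in the relative group $K_0(X_{n+1}, X_n)_{\mathbb{Q}}$ attached to the square-zero extension by $t^{n+1}\mathcal{O}_{X_{n+1}}$.

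The central tool is a Goodwillie-type isomorphism identifying rational relative K-theory with relative negative cyclic homology for nilpotent ideals; combined with Hochschild-Kostant-Rosenberg, this embeds the obstruction into a truncated relative de Rham complex equipped with its Hodge filtration. The hypothesis that $\Phi^{-1}(\mathrm{ch}(\xi_0))$ lies simultaneously in $H^{2i}_{dR}(\mathcal{X}/S)^{\nabla}$ and in $F^i H^{2i}_{dR}(\mathcal{X}/S)$ is designed to annihilate this obstruction at each stage: horizontality controls the Gauss-Manin variation of the candidate class, while the Hodge filtration controls the piece in which the $HN$-valued obstruction actually sits.

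After carrying out the induction one obtains a compatible system $(\xi_n) \in \varprojlim_n K_0(X_n)_{\mathbb{Q}}$, and the final task is to algebraize this formal family to an honest element $\xi \in K_0(\mathcal{X})_{\mathbb{Q}}$ by a formal GAGA or continuity statement for K-theory. I expect the main obstacle to be twofold: first, making the identification between the K-theoretic obstruction (extracted via the Chern character to $HN$) and the Hodge-theoretic obstruction (read off from $F^i$ together with $\nabla$) completely precise at each order, since this is the step where the Chern character from K-theory to negative cyclic homology announced in the abstract does its real work; and second, the passage from the formal inverse limit to an actual class in $K_0(\mathcal{X})_{\mathbb{Q}}$, which in higher codimension and positive geometric dimension is genuinely delicate.
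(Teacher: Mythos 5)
You have set out to prove something that is not a theorem of this paper at all: the statement is quoted as a \emph{conjecture} (Conjecture 1.4 of Bloch--Esnault--Kerz), it is open, and the paper neither proves it nor claims to. Your direction $(2)\Rightarrow(1)$ is indeed the standard formal argument (flatness of Chern characters of classes defined over $S$ under the Gauss--Manin connection, plus compatibility with the Hodge filtration), but for $(1)\Rightarrow(2)$ the two steps you defer are precisely the open content, not technical details. First, the order-by-order obstruction elimination is not a consequence of the hypothesis. The obstruction to lifting a class from $K_0(X_n)_{\mathbb{Q}}$ to $K_0(X_{n+1})_{\mathbb{Q}}$, identified via Goodwillie's theorem with a class in relative negative cyclic homology, is \emph{not} annihilated merely because $\Phi^{-1}\circ\mathrm{ch}(\xi_0)$ is $\nabla$-flat and lies in $F^i$; the Hodge condition only controls one graded piece of where that obstruction lives. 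This is exactly why Bloch--Esnault--Kerz need the Chow--K\"unneth hypothesis (or $k$ algebraic over $\mathbb{Q}$) in their Theorem 1.2: the correspondences it provides are what project the class onto the component where the Hodge condition kills the obstruction. Your sentence that the hypothesis ``is designed to annihilate this obstruction at each stage'' is the assertion that needs proof, and no proof is known without such extra input.

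Second, even granting a compatible system $\hat\xi\in\bigl(\varprojlim_n K_0(X_n)\bigr)\otimes\mathbb{Q}$, there is no ``formal GAGA or continuity statement for K-theory'' that produces $\xi\in K_0(\mathcal{X})_{\mathbb{Q}}$: $K_0$ does not commute with this inverse limit, continuity fails here, and the passage from the formal (pro-)lift to an honest class on $\mathcal{X}$ is shown in Appendix A of \cite{BEK2} to be equivalent to Grothendieck's variational Hodge conjecture --- that is, this step is the conjecture itself in another guise. This is exactly why the result actually available in the literature (quoted in the paper as Theorem \ref{C:BEK}) concludes only with an element of $\bigl(\varprojlim_n K_0(X_n)\bigr)\otimes\mathbb{Q}$, and only under the Chow--K\"unneth assumption. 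So your proposal correctly reproduces the known reduction strategy (obstruction theory via the Chern character to negative cyclic homology, then algebraization), but the two points you yourself flag as ``main obstacles'' are not gaps to be filled by routine work; together they constitute the open problem, and no argument you give addresses either.
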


Some recent progress on the infinitesimal Hodge conjecture has been made by Bloch-Esnault-Kerz \cite{BEK, BEK2}, Green-Griffiths \cite{GGdeformation} and Morrow \cite{M}. Especially relevant to our study of algebraic cycles is the work by Bloch, Esnault and Kerz \cite{BEK2}. They proved that, in appendix A of \cite{BEK2}, the infinitesimal Hodge conjecture is equivalent to the variational Hodge conjecture proposed by Grothendieck \cite{G}. Moreover, motivated by the infinitesimal (variational) Hodge conjecture, they proved the following:
\begin{theorem} [see Theorem 1.2 of \cite{BEK2}]
\label{C:BEK}
Assuming that the Chow-K\"unneth property (part of the standard conjecture) holds, the following statements are equivalent for an element 
$\xi _0 \in K_0 (X_0)_{\mathbb{Q}}$:
\begin{itemize}
\item [1.] $\Phi ^{-1} \circ \mathrm{ch}(\xi _0) \in  \bigoplus _i H^{2i} _{dR} (\mathcal{X}/S) ^{\nabla}\cap F^i H^{2i} _{dR} (\mathcal{X}/S)$;
\item[2.] there is an element 
$\hat \xi \in \left (\varprojlim _n K_0 (X_n) \right )\otimes \mathbb{Q}$ such that 
\[
\mathrm{ch}(\hat{\xi} \mid _{X_0})  = \mathrm{ch}(\xi _0)  \in H^* _{dR} (X_0/k).
\]
\end{itemize}
\end{theorem}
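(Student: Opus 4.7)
The plan is to recast the lifting problem in negative cyclic homology, where Goodwillie's rational comparison with K-theory makes obstructions tractable, and then to use Chow--K\"unneth to transfer the Hodge-theoretic data in (1) back into K-theory. For the direction (2) $\Rightarrow$ (1), I would begin by noting that the Chern character refines to a map $\mathrm{ch}: K_0(\mathcal{X}) \to HN_0(\mathcal{X}/k)_{\mathbb{Q}}$ into negative cyclic homology, whose image, via the HKR-type identification of $HN_0(-/k)_{\mathbb{Q}}$ with the Hodge-filtered part $\bigoplus_i F^i H^{2i}_{dR}$ of de Rham cohomology, automatically lands in the Hodge filtration. Combined with the compatibility of $\mathrm{ch}$ with the Gauss--Manin connection, a coherent system $\hat\xi$ produces a flat, Hodge-filtered lift of $\mathrm{ch}(\xi_0)$, which is precisely condition (1).

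For the harder direction (1) $\Rightarrow$ (2), the strategy is inductive lifting along the tower $\cdots \to X_{n+1} \to X_n \to \cdots \to X_0$. Given a lift $\xi_n \in K_0(X_n)_{\mathbb{Q}}$, the obstruction to producing $\xi_{n+1} \in K_0(X_{n+1})_{\mathbb{Q}}$ lives in the relative K-group of the nilpotent thickening $X_n \hookrightarrow X_{n+1}$, which by Goodwillie's theorem is, after $\otimes \mathbb{Q}$, isomorphic to the corresponding relative negative cyclic homology. Applying HKR once more identifies that relative $HN$ with a Hodge-filtered quotient of $H^*_{dR}(X_0/k) \otimes t^{n+1}k[t]/t^{n+2}$. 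The hypothesis that $\Phi^{-1}\mathrm{ch}(\xi_0)$ is $\nabla$-flat and Hodge-filtered translates, order by order in $t$, into the vanishing of each such obstruction class in its Hodge-filtered incarnation.

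The main obstacle I anticipate is that vanishing of the obstruction on the de Rham (or $HN$) side only produces a lift of $\mathrm{ch}(\xi_n)$, not of $\xi_n$ itself; to upgrade this to an actual K-theoretic lift, one needs the target of $\mathrm{ch}$ to detect enough of $K_0$. This is exactly where the Chow--K\"unneth assumption enters: it supplies algebraic projectors $\pi^{2i}$, realized by correspondences, that commute with $\mathrm{ch}$ and decompose $K_0$ in a way compatible with the Hodge grading, so that a Hodge-theoretic vanishing can be promoted to a K-theoretic one. Moreover, working one $\pi^{2i}$-component at a time lets one choose the lifts $\xi_n$ coherently and assemble them into $\hat\xi \in \varprojlim_n K_0(X_n)_{\mathbb{Q}}$; the equality $\mathrm{ch}(\hat\xi|_{X_0}) = \mathrm{ch}(\xi_0)$ then holds by construction, since the initial term of the system is $\xi_0$ itself.
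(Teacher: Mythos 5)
You should first be aware that this paper does not prove Theorem \ref{C:BEK} at all: it is quoted from Bloch--Esnault--Kerz \cite{BEK2}, and the only indication given here of the argument is the remark that the key point is to eliminate the obstructions to lifting $\mathrm{ch}(\xi_0)$ by means of the correspondences that the Chow--K\"unneth assumption supplies. Measured against that, your sketch of (2)$\Rightarrow$(1) is acceptable in outline (horizontality of the Chern character plus the fact that it lands in the Hodge filtration, passed to the limit), but in (1)$\Rightarrow$(2) you have placed the Chow--K\"unneth hypothesis at the wrong step and treated as automatic the step for which it is actually needed.

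Rationally there is no difficulty in ``upgrading a lift of $\mathrm{ch}(\xi_n)$ to a lift of $\xi_n$'': by Goodwillie's theorem the obstruction to lifting $\xi_n$ along $X_{n+1}\to X_n$ already lives in a relative K-group which is isomorphic to a relative cyclic homology group, i.e.\ (via HKR and Hodge--de Rham degeneration) to a truncated de Rham group of the shape $\bigoplus_i H^{2i}_{dR}(X_0/k)/F^i$ tensored with the new infinitesimal direction; vanishing there lifts the K-theory class itself. The genuine gap is in the sentence where you claim that hypothesis (1) ``translates, order by order in $t$, into the vanishing of each such obstruction class.'' The obstruction class is attached to the chosen lift $\xi_n$, and to compare it with (1) you must know that $\mathrm{ch}(\xi_n)$ agrees with the truncation of the flat class $\Phi^{-1}\mathrm{ch}(\xi_0)$; this does not propagate through the induction for free, because once an obstruction vanishes the next lift $\xi_{n+1}$ is only determined up to $K_0(X_{n+1},X_n)_{\mathbb{Q}}$, whose Chern character moves $\mathrm{ch}(\xi_{n+1})$ only in the ``mod $F$'' directions, so the Hodge-filtered components of $\mathrm{ch}(\xi_{n+1})$ cannot simply be corrected to match the flat class. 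It is exactly at this point that \cite{BEK2} bring in the algebraic K\"unneth projectors, acting as correspondences on the tower and on the obstruction spaces, to eliminate the obstructions; in your write-up Chow--K\"unneth is instead invoked to make $\mathrm{ch}$ ``detect enough of $K_0$,'' which Goodwillie already provides rationally. Two further points are glossed over: the projectors are cycles on $X_0\times X_0$, and making them act compatibly on $K_0(X_n)$ and on the relative/obstruction groups is itself part of the work, not a given; and the theorem requires $\hat\xi\in\bigl(\varprojlim_n K_0(X_n)\bigr)\otimes\mathbb{Q}$ rather than an element of $\varprojlim_n\bigl(K_0(X_n)_{\mathbb{Q}}\bigr)$, so assembling level-wise rational lifts into such an element needs control of denominators along the tower, which your last step does not address.
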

 
 The key point in the proof of this theorem  is to eliminate obstructions to lifting $\mathrm{ch}(\xi _0)$ by using correspondences (the assumption of Chow-K\"unneth property guarantees enough correspondences). Moreover, if the ground field $k$ is algebraic over $\mathbb{Q}$, without assuming the Chow-K\"unneth property,  Bloch, Esnault and Kerz deduced that the obstructions to lifting $\mathrm{ch}(\xi _0)$ can be eliminated.

In the pioneering work \cite{GGtangentspace}, Green and Griffiths studied the deformation of algebraic cycles. In particular, they studied the first order deformations of divisors and zero cycles and then defined their tangent spaces. Dribus, Hoffman and the author extended much of their theory in \cite{DHY, Y-4, Y-2, Y-3}.

Let $X$ be a smooth projective variety over a field $k$ of characteristic $0$ and let $Y \subset X$ be a subvariety of codimension $1$, it is well known that the embedded deformation of the subvariety $Y$ may be obstructed. However, by considering $Y$ as an element of the cycle group $Z^{1}(X)$, Green-Griffiths predicted that we could lift the divisor $Y$ to higher order successively. This was verified by Ng in his Ph.D thesis \cite{Ng} by using the semi-regularity map defined by Kodaira-Spencer \cite{KS} and Bloch \cite{Bloch}.

We sketch Ng's idea briefly. When an infinitesimal deformation of $Y$ is obstructed to higher order,  let $Z$ be a very ample divisor such that $H^{1}(O_{X}(Y+Z))=0$. According to Proposition 1.1 of \cite{Bloch}, the semi-regularity map
\[
\pi: H^{1}(Y \cup Z, N_{Y \cup Z /X}) \to H^{2}(O_{X}),
\]
where $N_{Y \cup Z/X}$ is the normal bundle, agrees with the boundary map in the long exact sequence
\[
 \cdots \to  H^{1}(O_{X}(Y+Z))  \to H^{1}(Y \cup Z, N_{Y \cup Z/X}) \xrightarrow{\pi} H^{2}(O_{X})  \to \cdots.
\]
Since $H^{1}(O_{X}(Y+Z))=0$, the kernel of $\pi$ is $0$, $Y \cup Z$ is semi-regular in $X$. According to Kodaira-Spencer \cite{KS} (see also Theorem 1.2 of \cite{Bloch}), $Y \cup Z$ can be lifted to higher order successively. On the other hand, $Z$ can be always lifted to trivial deformations $Z \times_{\mathrm{Spec}(k)}\mathrm{Spec}(k[\varepsilon]/(\varepsilon^{j+1}))$ successively.

As a cycle, $Y$ can be written as a formal sum
\[
Y= (Y+Z)-Z.
\]
To deform the cycle $Y$ is equivalent to deforming $Y \cup Z$ and $Z$ respectively. Hence, $Y$ lifts to higher order successively, since both $Y \cup Z$ and $Z$ do.

The above method suggests an interesting idea to eliminate obstructions:
\begin{idea} \label{idea: i}
 When the deformation of $Y$ is obstructed, find $Z$ such that
\[
\begin{cases}
1.  \ Z \  \mathrm{helps \ Y \ to \ eliminate \ obstructions},  \\
2. \  Z  \ \mathrm{does \ not \ bring \ new \ obstructions}.
 \end{cases}
\]
\end{idea}

While the deformation of divisors are relatively well understood, it is natural to ask how to go beyond the divisor case. A very interesting work on obstructions to deforming curves on a three-fold had been done by Mukai-Nasu \cite{MN}. Inspired by a question asked by Ng in section 1.5 of \cite{Ng}, the author \cite{Y-5} used K-theory to study the deformation of 1-cycles on a three-fold. For $Y \subset X$ a subvariety of codimension $p$, where $p$ is an integer such that $1 \leq p \leq \mathrm{dim}(X)$, Green-Griffiths \cite{GGtangentspace} (page 187-190) conjectured that we could lift the cycle $Y \in Z^{p}(X)$ to higher order successively. Their conjecture has been answered in Theorem 3.11 of \cite{Y-3}.

This paper is to generalize Idea 1.3 to the study of deformations of cycles codimension $p$. In the second section, we recall background on K-theory and Milnor K-theoretic cycles. In section 3, we show how to eliminate obstructions to deforming cycles of codimension $p$ in Theorem \ref{theorem: mainTheorem}.

We remark that the main result of this paper,  Theorem \ref{theorem: mainTheorem}, is different from Theorem 3.11 of \cite{Y-3}. This is mainly because we do not know whether the map $\mu_{Y}$ of Definition \ref{definition: map1} is surjective or not\footnote{The author thanks Spencer Bloch \cite{Bloch1} for discussions on this issue.}.

\textbf{Notation:}

(1). K-theory in this paper is Thomason-Trobaugh non-connective K-theory, if not stated otherwise. 

(2). For any abelian group $M$, $M_{\mathbb{Q}}$ denotes $M \otimes_{\mathbb{Z}} \mathbb{Q}$. 

(3). If not stated otherwise, $X$ is a smooth projective variety over a field $k$ of characteristic $0$. For each integer $j \geq 0$, $X_{j}$ denotes the $j$-th infinitesimally trivial deformation of $X$, i.e., $X_{j}= X \times_{\mathrm{Spec(k)}} \mathrm{Spec}(k[\varepsilon]/ \varepsilon^{j+1})$.

\section{K-theory and deformation of cycles}
\label{Ng's question and reformulation}
The following setting is used below.
\begin{setting}  \label{setting:s}
Let $Y \subset X$ be an irreducible closed subvariety of codimension $p$, with generic point $y$. Let $W \subset Y$ be an irreducible closed subvariety of codimension 1 in $Y$, with generic point $w$. 

We assume that $W$ is generically defined by $f_{1}, f_{2}, \cdots, f_{p}, f_{p+1}$ and $Y$ is generically defined by $f_{1}, f_{2}, \cdots, f_{p}$. It follows that $O_{X,y}=(O_{X,w})_{Q_{1}}$, where $Q_{1}$ is the ideal $(f_{1}, f_{2}, \cdots, f_{p}) \subset O_{X,w}$. 
\end{setting}

A first order infinitesimal embedded deformation $Y^1 \subset X_{1}$ is generically given by $(f_{1}+ \varepsilon g_{1}, f_{2}+ \varepsilon g_{2}, \cdots, f_{p}+ \varepsilon g_{p})$, where $g_{1}, \cdots, g_{p} \in O_{X,y}$, see \cite{Y-4} (page 711-712) for related discussions if necessary.

Let $F_{\bullet}(f_{1}+\varepsilon g_{1}, \cdots, f_{p}+\varepsilon g_{p})$ denote the Koszul complex associated to the  regular sequence $\{f_{1}+\varepsilon g_{1}, \cdots, f_{p}+\varepsilon g_{p}\}$, which defines an element of the Grothendieck group $K_{0}(O_{X_{1},y} \ \mathrm{on} \ y)_{\mathbb{Q}}$. We recall a map from the Zariski tangent space $\mathrm{T}_{Y}\mathrm{Hilb}(X)$ to the Hilbert scheme at the point $Y$ to the Grothendieck group $K_{0}(O_{X_{1},y} \ \mathrm{on} \ y)_{\mathbb{Q}}$.
\begin{definition}  [Definition 2.4 of \cite{Y-4}] \label{definition: map1}
With notation as above, we define a  map 
\begin{align*}
\mu_{Y}: \mathrm{T}_{Y}\mathrm{Hilb}& (X) \to K_{0}(O_{X_{1},y} \ \mathrm{on} \ y)_{\mathbb{Q}} \\
& Y^1 \longrightarrow  F_{\bullet}(f_{1}+\varepsilon g_{1}, \cdots, f_{p}+\varepsilon g_{p}).
\end{align*}

For $Y \in \mathrm{T}_{Y}\mathrm{Hilb}(X)$, $\mu_{Y}(Y)=F_{\bullet}(f_{1}, \cdots, f_{p})$, where $F_{\bullet}(f_{1}, \cdots, f_{p})$ is the Koszul complex associated to $f_{1}, \cdots, f_{p}$.
\end{definition}

In notation of Setting \ref{setting:s}, let $z$ be the point defined by the prime ideal $Q_{2}=(f_{p+1}, f_{2}, \cdots,f_{p}) \subset O_{X,w}$, then $z \in X^{(p)}$. 
\begin{definition} \label{definition: definingCurveZ}
We define a subscheme $Z \subset X$ to be the Zariski closure of $z$ with closed reduced structure:
\[
Z := \overline{\{z \}}.
\]
\end{definition}
 
 \begin{remark} \label{remark:muz}
 We can similarly define a map 
 \[
 \mu_{Z}: \mathrm{T}_{Z}\mathrm{Hilb}(X) \to K_{0}(O_{X_{1},z} \ \mathrm{on} \ z)_{\mathbb{Q}}
 \] as in Definition \ref{definition: map1}. Let $F_{\bullet}(f_{p+1}, f_{2}, \cdots,f_{p})$ be the
Koszul complex of the sequence $\{f_{p+1}, f_{2}, \cdots,f_{p}\}$. For $Z \in \mathrm{T}_{Z}\mathrm{Hilb}(X) $, $\mu_{Z}(Z)=F_{\bullet}(f_{p+1}, f_{2}, \cdots,f_{p})$. 
 \end{remark}

Recall that Milnor K-groups with support are rationally defined as certain eigenspaces of K-groups in \cite{Y-2}.
\begin{definition}  [Definition 3.2 of \cite{Y-2}] \label{definition:Milnor K-theory with support}
Let $X$ be a finite equi-dimensional noetherian scheme and $y \in X^{(p)}$. For each $m \in \mathbb{Z}$, Milnor K-group with support $K_{m}^{M}(O_{X,y} \ \mathrm{on} \ y)$ is rationally defined to be 
\[
  K_{m}^{M}(O_{X,y} \ \mathrm{on} \ y) := K_{m}^{(m+p)}(O_{X,y} \ \mathrm{on} \ y)_{\mathbb{Q}},
\] 
where $K_{m}^{(m+p)}$ is the eigenspace of $\psi^{l}=k^{m+p}$ and $\psi^{l}$ is the Adams operations.
\end{definition}

Adams operations $\psi^{l}$ for K-theory of perfect complexes defined by Gillet-Soul\'e \cite{GilletSoule} has the following property:
\begin{lemma} [Prop 4.12  of \cite{GilletSoule}]  \label{Lemma: GilletSoule}
Let $L(x_{1}, \cdots, x_{p})$ be the Koszul complex of a regular sequence $x_{1}, \cdots, x_{p}$, then Adams operations $\psi^{l}$ on  $L(x_{1}, \cdots, x_{p})$ satisfy that
\[
\psi^{l}(L(x_{1}, \cdots, x_{p}))  = l^{p}L(x_{1}, \cdots, x_{p}).
\]
\end{lemma}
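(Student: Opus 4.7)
The proof strategy I would adopt proceeds in two stages. First, I would reduce the general case to the case $p=1$ by exploiting the multiplicativity of Adams operations. The Koszul complex factors as an iterated tensor product
$$L(x_1,\ldots,x_p) \cong K(x_1) \otimes_R \cdots \otimes_R K(x_p),$$
where $K(x_i)$ denotes the two-term complex $[R \xrightarrow{x_i} R]$. Since the Adams operations are ring homomorphisms on K-theory of perfect complexes with respect to tensor product (with compatible supports), one obtains $\psi^l(L(x_1,\ldots,x_p)) = \prod_{i=1}^p \psi^l(K(x_i))$. Hence it suffices to establish $\psi^l(K(x)) = l \cdot K(x)$ for a single regular element $x$, after which multiplicativity produces the power $l^p$.

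For the one-variable case, I would compute $\psi^l$ directly from its definition via the Newton polynomial identity $\psi^l = N_l(\lambda^1,\ldots,\lambda^l)$ applied to the $\lambda$-ring of perfect complexes with support on $V(x)$. The exterior powers $\lambda^k(K(x))$ of the two-term complex admit an explicit description in which the antisymmetrization projectors isolate a subcomplex whose class in the K-group with support on $V(x)$ is a specific scalar multiple of $K(x)$. Feeding this description into Newton's identities collapses the sum to yield the eigenvalue $l$, as expected.

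The main technical obstacle I anticipate is making the $\lambda^k$-computation rigorous at the level of complexes, because exterior powers of chain complexes involve sign conventions and symmetric-group actions that do not interact naively with the differential. A cleaner route would be to argue universally: establish the identity first for the universal example $R = \mathbb{Z}[t]$ with $x = t$, where the two-term complex can be analyzed by a splitting-principle style argument in the algebraic $\lambda$-ring generated by a single Bott-type element, and then specialize by functoriality of K-theory with support along the ring homomorphism $\mathbb{Z}[t] \to R$ sending $t$ to $x$. Once the $p=1$ case is settled over the universal base, the multiplicative reduction of the first paragraph delivers the full statement.
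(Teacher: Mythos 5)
You should first note that the paper itself contains no proof of this lemma: it is quoted verbatim as Proposition 4.12 of Gillet--Soul\'e and used as a black box, so the only meaningful comparison is with the argument in \cite{GilletSoule}. Measured against that, your two-stage plan is essentially the standard (and essentially their) route: the Koszul complex is the tensor product of the two-term complexes $K(x_i)=[R\xrightarrow{x_i}R]$, and since the Gillet--Soul\'e $\lambda$-operations make K-theory with supports into a $\lambda$-ring compatible with the product $K_0^{Y}\otimes K_0^{Z}\to K_0^{Y\cap Z}$, the Adams operations are multiplicative and the problem reduces to showing $\psi^l(K(x))=l\cdot K(x)$ for a single regular element. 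So the outline is sound and not a genuinely different proof, just a reconstruction of the cited one.

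The gap is that the rank-one case, which is the entire content of the statement, is only gestured at. The Newton-identity route cannot be made ``naive'': the exterior powers $\lambda^k$ of a two-term complex are the Dold--Puppe derived exterior powers, which is precisely the machinery Gillet--Soul\'e build, so invoking it is circular unless you actually carry out that computation. Your universal alternative is the cleaner fix, but as written it only proves that $\psi^l[K(t)]$ is \emph{some} integer multiple of $[K(t)]$: you need (a) the identification $K_0^{V(t)}(\mathbb{Z}[t])\cong\mathbb{Z}$ with generator the Koszul class (localization plus d\'evissage/homotopy invariance), (b) naturality of $\psi^l$ under the termwise base change $\mathbb{Z}[t]\to R$, $t\mapsto x$ (unproblematic, since exterior powers of complexes of free modules commute with base change), and crucially (c) an actual determination of the scalar. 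The standard way to get (c) is the Bott-class computation you allude to but do not perform: compare with $1-[\mathcal{O}(-1)]$ on $\mathbb{P}^1$, where $\psi^l(1-[\mathcal{O}(-1)])=1-[\mathcal{O}(-l)]=(1-[\mathcal{O}(-1)])(1+[\mathcal{O}(-1)]+\cdots+[\mathcal{O}(-1)]^{l-1})\equiv l\,(1-[\mathcal{O}(-1)])$ because the square of the class vanishes, together with compatibility of this class with the Koszul class under the relevant localization/excision maps. Until step (c) is written out, the eigenvalue $l$ (hence $l^p$) has not been established.
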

It follows that $F_{\bullet}(f_{1}+\varepsilon g_{1}, \cdots, f_{p}+\varepsilon g_{p}) \in K_{0}(O_{X_{1},y} \ \mathrm{on} \ y)_{\mathbb{Q}}$ lies in the eigenspace space $K^{(p)}_{0}(O_{X_{1},y} \ \mathrm{on} \ y)_{\mathbb{Q}}$. In other words, $F_{\bullet}(f_{1}+\varepsilon g_{1}, \cdots, f_{p}+\varepsilon g_{p})$ lies in the Milnor K-groups with support:
\[
F_{\bullet}(f_{1}+\varepsilon g_{1}, \cdots, f_{p}+\varepsilon g_{p}) \in K^{M}_{0}(O_{X_{j},y} \ \mathrm{on} \ y).
\]

\begin{theorem} [Theorem 3.14 of \cite{Y-2}]  \label{theorem: firstorder}
For each integer $j>0$, there exists the following commutative diagram in which the morphisms $\mathrm{Ch}$ from K-groups to local cohomology groups are induced by Chern character from K-theory to negative cyclic homology
\begin{equation}
  \begin{CD}
     \bigoplus\limits_{y \in X^{(p)}} H_{y}^{p}((\Omega_{X/ \mathbb{Q}}^{p-1})^{\oplus j}) @<\mathrm{Ch}<< \bigoplus\limits_{y \in X^{(p)}}K^{M}_{0}(O_{X_{j},y} \ \mathrm{on} \ y) \\
      @V\partial_{1}^{p,-p}VV @Vd_{1,X_{j}}^{p,-p}VV  \\
     \bigoplus\limits_{w \in X^{(p+1)}} H_{w}^{p+1}((\Omega_{X/ \mathbb{Q}}^{p-1})^{\oplus j}) @<\mathrm{Ch}< \cong < \bigoplus\limits_{w \in X^{(p+1)}}K^{M}_{-1}(O_{X_{j},w} \ \mathrm{on} \ w)=0. \\
  \end{CD}
\end{equation}
\end{theorem}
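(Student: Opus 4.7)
The plan is to realize the square in (3.14) as a piece of the morphism of coniveau (Cousin-type) spectral sequences induced by the Chern character from non-connective K-theory to negative cyclic homology on $X_j$, and then to restrict everything to the appropriate Adams eigenspaces so that the two columns become Milnor K-theory with support and local cohomology of Kähler differentials respectively.

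First I would recall that $\mathrm{Ch}\colon K_*(-)\to HN_*(-)$ is natural for flat pullback and respects supports, so it extends to a map of the presheaves on $X_j$ filtered by codimension of support. Passing to the $E_1$-page of the resulting Cousin complex yields, on each side, the columns of (3.14), and the $d_1$ differentials $d_{1,X_j}^{p,-p}$ and $\partial_1^{p,-p}$ are by construction compatible with $\mathrm{Ch}$. This already delivers the commutativity of the square, provided the horizontal arrows are correctly identified. For the identification I would invoke the Adams decomposition: by Proposition 4.12 of \cite{GilletSoule} the Chern character respects the weight filtration, and Hochschild--Kostant--Rosenberg identifies the weight-$p$ summand of $HN_0$ with $\Omega^{p-1}$. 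Applied to the thickening $O_{X_j,y}=O_{X,y}\otimes_k k[\varepsilon]/\varepsilon^{j+1}$ and after quotienting by the contribution from $X$ itself, the relative infinitesimal piece splits as $\bigoplus_{i=1}^{j}\Omega^{p-1}_{O_{X,y}/\mathbb{Q}}\cdot d\varepsilon^i$; taking the local cohomology $H^p_y$ yields the stated target $H^p_y\bigl((\Omega^{p-1}_{X/\mathbb{Q}})^{\oplus j}\bigr)$. The identical computation one codimension down gives the bottom row.

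For the vanishing claim, I would combine Definition \ref{definition:Milnor K-theory with support} with the fact that on the local ring at a point of codimension $p+1$ the $(p)$-eigenspace of $K_{-1}$ on the thickening is zero: either because negative K-theory of regular local rings vanishes outright, or because the $\psi^l$-eigenvalues occurring on $K_{-1}(O_{X_j,w}\text{ on }w)_{\mathbb{Q}}$ all lie outside $\{l^p\}$. Either way, the bottom-left term collapses to $0$ and the bottom Chern character is trivially an isomorphism.

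The main obstacle I expect is Step 2, namely matching the HKR weight decomposition with the $j$-fold splitting along $d\varepsilon, d\varepsilon^2,\dots,d\varepsilon^j$ and verifying that this identification commutes with the Cousin $d_1$ on both sides. The naturality of $\mathrm{Ch}$ takes care of the outer commutativity formally, but pinning down that the Milnor/HKR identifications on the two rows intertwine the two $d_1$'s requires a careful trace through the residue maps in both the K-theoretic and the differential-form Gersten complexes — and this is where the characteristic-zero hypothesis and the Adams-eigenspace bookkeeping are essential.
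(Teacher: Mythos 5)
Two remarks before the substance: the paper you are working from does not prove this statement at all — it is quoted verbatim from Theorem 3.14 of \cite{Y-2} — so your proposal has to be measured against the proof given there. Your overall architecture does agree with it: in \cite{Y-2} the square is the $E_1$-level of a morphism of coniveau spectral sequences (built from the codimension-of-support filtration via Theorem \ref{theorem: Balmer theorem} and idempotent completion) induced by the Chern character from non-connective K-theory to negative cyclic homology, restricted to the weight-$p$ Adams eigenspaces; commutativity is then formal from naturality, and the identification of the K-theoretic $E_1$-terms with local cohomology of $(\Omega^{p-1}_{X/\mathbb{Q}})^{\oplus j}$ is the relative computation for the truncated polynomial thickening (Corollary 9.5 of \cite{DHY}, Corollary 3.11 of \cite{Y-2}), which rests on Goodwillie's theorem and the compatibility of the $\lambda$/Adams decomposition — this is where characteristic zero enters, as you anticipate.

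The genuine gap is in the step you treat as easiest, the vanishing $\bigoplus_{w\in X^{(p+1)}}K^{M}_{-1}(O_{X_{j},w}\ \mathrm{on}\ w)=0$. Your first justification, ``negative K-theory of regular local rings vanishes outright,'' does not apply: $O_{X_{j},w}\cong O_{X,w}[\varepsilon]/(\varepsilon^{j+1})$ is not regular for $j\geq 1$, and in any case the group in question is K-theory \emph{with supports}, which sits in a localization sequence involving $K_{0}$ of the punctured spectrum and is not controlled by vanishing of absolute negative K-groups. Your second justification (the $\psi^{l}$-eigenvalue $l^{p}$ does not occur) is the correct statement, but it is precisely what has to be proved: the argument in \cite{Y-2} splits the group into the summand coming from $X$ itself — identified by d\'evissage for the regular local ring $O_{X,w}$ with $K_{-1}(k(w))_{\mathbb{Q}}=0$ — and the relative summand along the nilpotent ideal, which by Goodwillie's theorem is relative (negative) cyclic homology with support, computed through local cohomology of the truncated-polynomial HKR terms, whose weight-$p$ piece vanishes; your sketch offers no substitute for this computation. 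Relatedly, your phrasing that ``the bottom-left term collapses to $0$'' and the bottom Ch is ``trivially an isomorphism'' cannot be right as stated: $\bigoplus_{w}H^{p+1}_{w}((\Omega^{p-1}_{X/\mathbb{Q}})^{\oplus j})$ is not zero (the paper later exhibits nontrivial classes $\beta_{1}=-\beta_{2}$ in it); only the K-theoretic source vanishes. Finally, the issue you defer as the ``main obstacle'' — that the weight-$p$ identifications of the $E_1$-terms intertwine $d_{1,X_{j}}^{p,-p}$ with $\partial_{1}^{p,-p}$ — is not a side issue but the substance of Theorem 3.14 of \cite{Y-2}; leaving it unverified makes your text a correct plan following the same route as the source, rather than a proof.
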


Tensor triangular Chow groups of a tensor triangulated category were defined by Balmer \cite{B-5}, and they were further explored by Klein \cite{K}. By slight modifying Balmer's definition, we proposed Milnor K-theoretic cycles.
\begin{definition}[Definition 3.4 and 3.15 of \cite{Y-2}] \label{definition: Milnor K-theoretic Chow groups}
The $p$-th Milnor K-theoretic cycle group of $X$ is defined to be\footnote{It was proved in Theorem 3.16 of \cite{Y-2} that $Z^{M}_{p}(D^{\mathrm{Perf}}(X))$ agreed with the classical cycle group $Z^{p}(X)_{\mathbb{Q}}$.}
\[
Z^{M}_{p}(D^{\mathrm{Perf}}(X)) := \bigoplus\limits_{y \in X^{(p)}} K^{M}_{0}(O_{X,y}  \ \mathrm{on} \ y).
\]

For each integer $j>0$, the $p$-th Milnor K-theoretic cycle group of $X_{j}$ is defined to be\footnote{The reason why we use the kernel of $d_{1,X_{j}}^{p,-p}$ to define Milnor K-theoretic cycles $Z^{M}_{p}(D^{\mathrm{Perf}}(X_{j}))$ is explained in section 2.2 of \cite{Y-3}.}
\[
Z^{M}_{p}(D^{\mathrm{Perf}}(X_{j})) := \mathrm{Ker}(d_{1,X_{j}}^{p,-p}), 
\]
where $d_{1,X_{j}}^{p,-p}$ is the differential in the commutative diagram (2.1).

The elements of $Z^{M}_{p}(D^{\mathrm{Perf}}(X))$ and $Z^{M}_{p}(D^{\mathrm{Perf}}(X_{j}))$ are called Milnor K-theoretic cycles.
\end{definition}

By Lemma \ref{Lemma: GilletSoule}, both $\mu_{Y}(Y)$ and $\mu_{Z}(Z)$ have Adams eigenweight $p$. This shows that
\begin{corollary} \label{corollary:muYZ}
Both $\mu_{Y}(Y)$ and $\mu_{Z}(Z)$ are Milnor K-theoretic cycles
\[
\mu_{Y}(Y) \in Z^{M}_{p}(D^{\mathrm{Perf}}(X)), \ \mu_{Z}(Z) \in Z^{M}_{p}(D^{\mathrm{Perf}}(X)).
\]
\end{corollary}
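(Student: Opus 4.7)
The plan is to observe that this corollary follows immediately by combining Lemma \ref{Lemma: GilletSoule} with the definition of Milnor K-groups with support (Definition \ref{definition:Milnor K-theory with support}) and of Milnor K-theoretic cycles (Definition \ref{definition: Milnor K-theoretic Chow groups}); no new geometric or homological input is required beyond what has already been assembled in this section.

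First I would unpack $\mu_{Y}(Y)$. By Definition \ref{definition: map1}, evaluating $\mu_{Y}$ at the base point $Y$ (that is, taking $g_{1}=\cdots=g_{p}=0$) yields the Koszul complex $F_{\bullet}(f_{1}, \cdots, f_{p})$, which does not involve $\varepsilon$ and therefore represents a class in $K_{0}(O_{X,y} \ \mathrm{on} \ y)_{\mathbb{Q}}$. By Lemma \ref{Lemma: GilletSoule}, the Adams operation $\psi^{l}$ acts on this class as multiplication by $l^{p}$, so $\mu_{Y}(Y)$ lies in the eigenspace $K_{0}^{(p)}(O_{X,y} \ \mathrm{on} \ y)_{\mathbb{Q}}$, which by Definition \ref{definition:Milnor K-theory with support} is precisely $K_{0}^{M}(O_{X,y} \ \mathrm{on} \ y)$. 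Since $Y$ has codimension $p$ in Setting \ref{setting:s}, the generic point $y$ lies in $X^{(p)}$, so $\mu_{Y}(Y)$ sits as a single-component element of the direct sum $\bigoplus_{y \in X^{(p)}} K_{0}^{M}(O_{X,y} \ \mathrm{on} \ y) = Z_{p}^{M}(D^{\mathrm{Perf}}(X))$.

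The statement for $\mu_{Z}(Z)$ is established by exactly the same argument. By Remark \ref{remark:muz}, $\mu_{Z}(Z) = F_{\bullet}(f_{p+1}, f_{2}, \cdots, f_{p})$ is the Koszul complex of a regular sequence of length $p$ generating the prime ideal $Q_{2} \subset O_{X,w}$, and the point $z$ cut out by $Q_{2}$ lies in $X^{(p)}$ by construction. Applying Lemma \ref{Lemma: GilletSoule} once more yields Adams eigenweight $p$, placing $\mu_{Z}(Z)$ in $K_{0}^{M}(O_{X,z} \ \mathrm{on} \ z)$ and hence as a summand of $Z_{p}^{M}(D^{\mathrm{Perf}}(X))$.

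Because both claims reduce to reading off eigenweights from Lemma \ref{Lemma: GilletSoule} and matching them against Definitions \ref{definition:Milnor K-theory with support} and \ref{definition: Milnor K-theoretic Chow groups}, I do not anticipate any substantive obstacle. The only point that requires mild attention is to confirm that these classes land in $Z_{p}^{M}(D^{\mathrm{Perf}}(X))$ rather than $Z_{p}^{M}(D^{\mathrm{Perf}}(X_{j}))$, which is automatic because both Koszul complexes are defined without any $\varepsilon$-deformation and therefore carry no kernel-of-differential condition to verify; the corollary is essentially a recasting of the Gillet--Soul\'e computation in the cycle-theoretic language introduced earlier in this section.
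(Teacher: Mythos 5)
Your proposal is correct and follows the paper's own argument exactly: the Koszul complexes $F_{\bullet}(f_{1},\dots,f_{p})$ and $F_{\bullet}(f_{p+1},f_{2},\dots,f_{p})$ have Adams eigenweight $p$ by Lemma \ref{Lemma: GilletSoule}, hence lie in $K^{M}_{0}(O_{X,y}\ \mathrm{on}\ y)$ and $K^{M}_{0}(O_{X,z}\ \mathrm{on}\ z)$ with $y,z\in X^{(p)}$, which by Definition \ref{definition: Milnor K-theoretic Chow groups} places them in $Z^{M}_{p}(D^{\mathrm{Perf}}(X))$. Your closing observation that no kernel-of-differential condition needs checking over $X$ (as opposed to $X_{j}$) is also accurate and consistent with Remark \ref{Remark:}.
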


\begin{remark} \label{Remark:}
It is obvious that $K^{M}_{0}(O_{X,y}  \ \mathrm{on} \ y)$ is a direct summand of $K^{M}_{0}(O_{X_{j},y} \ \mathrm{on} \ y)$ and its image under $d_{1,X_{j}}^{p,-p}$ is zero, so $Z^{M}_{p}(D^{\mathrm{Perf}}(X))$ is a direct summand of $Z^{M}_{p}(D^{\mathrm{Perf}}(X_{j}))$.

\end{remark}

Milnor K-theoretic cycles can detect nilpotents, which is important in the study of deformation of cycles. For each integer $j>0$, the natural map $g_{j}: X_{j-1} \to X_{j}$ 
induces a commutative diagram (see section 3.1 of  \cite{Y-3}),
\[
\begin{CD}
\bigoplus\limits_{y \in X^{(p)}}K^{M}_{0}(O_{X_{j}, y} \ \mathrm{on} \ y)   @>g_{j}^{*}>>
\bigoplus\limits_{y \in X^{(p)}}K^{M}_{0}(O_{X_{j-1},y} \ \mathrm{on} \ y)  \\ 
@Vd_{1,X_{j}}^{p,-p}VV  @Vd_{1,X_{j-1}}^{2,-2}VV \\ 
 \bigoplus\limits_{w \in X^{(p+1)}} K^{M}_{-1}(O_{X_{j},w} \ \mathrm{on} \ w)  @>g_{j}^{*}>>
\bigoplus\limits_{w \in X^{(p+1)}}K^{M}_{-1}(O_{X_{j-1},w} \ \mathrm{on} \ w).
\end{CD}
\]
This further induces 
\begin{equation}
g^{\ast}_{j}:  Z^{M}_{p}(D^{\mathrm{perf}}(X_{j})) \to  Z^{M}_{p}(D^{\mathrm{perf}}(X_{j-1})).
\end{equation}

\begin{definition}  [Definition 3.3 of \cite{Y-3}] \label{definition: deformation}
Given $\xi_{j-1} \in Z^{M}_{p}(D^{\mathrm{perf}}(X_{j-1}))$,  an element $\xi_{j} \in Z^{M}_{p}(D^{\mathrm{perf}}(X_{j}))$ is called a deformation (or lift) of $\xi_{j-1}$,  if $g^{\ast}_{j}(\xi_{j}) = \xi_{j-1}$.
\end{definition}

The elements $\xi_{j-1}$ and $\xi_{j}$ can be formally written as finite sums
\[
 \xi_{j-1}= \sum_{y \in X^{(p)}}\lambda_{j-1}\cdot \overline{\{y \}},  \  \xi_{j} = \sum_{y \in X^{(p)}}\lambda_{j}\cdot \overline{\{ y \}},
\] 
where $\overline{\{y \}}$ is with closed reduced structure and $\lambda_{j}$'s are perfect complexes such that
$\sum\limits_{y}\lambda_{j} \in \mathrm{Ker}(d_{1,X_{j}}^{p,-p}) \subset \bigoplus\limits_{y \in X^{(p)}}K_{0}(O_{X_{j}, y} \ \mathrm{on} \ y)_{\mathbb{Q}}$. When we lift from $\xi_{j-1}$ to $\xi_{j}$, we lift the coefficients from $\sum\limits_{y}\lambda_{j-1}$ to $\sum\limits_{y}\lambda_{j}$.

\section{Chern character and obstructions} 
\label{Curves on 3-fold-Ng's question}

Let $D^{\mathrm{perf}}(X_{j})$ denote the derived category obtained from the exact category of perfect complexes on $X_{j}$ and $\mathcal{L}_{(i)}(X_{j})$ is defined to be
\[
  \mathcal{L}_{(i)}(X_{j}) := \{ E \in D^{\mathrm{perf}}(X_{j}) \mid \mathrm{codim(supph(E))} \geq -i \},
\]
where the closed subset $\mathrm{supph}(E) \subset X$ is the support of the total homology of the perfect complex $E$ and the codimension of $\mathrm{supph}(E)$ is no less than $-i$.

Let $(\mathcal{L}_{(i)}(X_{j})/\mathcal{L}_{(i-1)}(X_{j}))^{\#}$ denote the idempotent completion of Verdier quotient $\mathcal{L}_{(i)}(X_{j})/\mathcal{L}_{(i-1)}(X_{j})$ in the sense of Balmer-Schlichting \cite{B-S}.
 \begin{theorem} [\cite{B-3}] \label{theorem: Balmer theorem}
 For each $i \in \mathbb{Z}$, localization induces an equivalence
\[
 (\mathcal{L}_{(i)}(X_{j})/\mathcal{L}_{(i-1)}(X_{j}))^{\#}  \simeq \bigsqcup_{x \in X^{(-i)}}D_{{x}}^{\mathrm{perf}}(X_{j})
\]
between the idempotent completion of the quotient $\mathcal{L}_{(i)}(X_{j})/\mathcal{L}_{(i-1)}(X_{j})$ and the coproduct over $x \in X^{(-i)}$ of the derived category of perfect complexes of $ O_{X_{j},x}$-modules with homology supported on the closed point $x \in \mathrm{Spec}(O_{X,x})$. Consequently, localization induces an isomorphism
\begin{equation}
K_{0}((\mathcal{L}_{(i)}(X_{j})/\mathcal{L}_{(i-1)}(X_{j}))^{\#})  \simeq \bigoplus_{x \in X^{(-i)}}K_{0}(O_{X_{j},x} \ \mathrm{on} \ x).
\end{equation}
\end{theorem}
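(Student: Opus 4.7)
The plan is to apply Balmer's tensor-triangular geometric framework, augmented by Balmer--Schlichting's theory of idempotent completions, to the tensor triangulated category $D^{\mathrm{perf}}(X_j)$. The argument decomposes naturally into three stages: constructing a local comparison functor at each codimension-$(-i)$ point, verifying that it kills the predecessor filtration piece $\mathcal{L}_{(i-1)}(X_j)$, and finally promoting the resulting functor to an equivalence after idempotent completion.

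First I would construct local comparison functors. For each $x \in X^{(-i)}$, localization along $\mathrm{Spec}(O_{X_j,x}) \to X_j$ restricts to a triangulated functor
\[
L_x : \mathcal{L}_{(i)}(X_j) \to D^{\mathrm{perf}}_{x}(X_j),
\]
because any perfect complex whose support has codimension $\geq -i$ in $X$ lands, after localization at $x$, in the subcategory of perfect $O_{X_j,x}$-modules with homology supported at the closed point. Objects of $\mathcal{L}_{(i-1)}(X_j)$ have supports of strictly higher codimension, so they miss $x$ and become acyclic under $L_x$; hence $L_x$ descends to the Verdier quotient. Assembling across all $x \in X^{(-i)}$ produces the candidate
\[
L : \mathcal{L}_{(i)}(X_j)/\mathcal{L}_{(i-1)}(X_j) \to \bigsqcup_{x \in X^{(-i)}} D^{\mathrm{perf}}_{x}(X_j).
\]

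Next, to show $L$ becomes an equivalence after idempotent completion, I would invoke Thomason--Neeman's localization theorem for perfect complexes, applied pointwise: at each $x$, the inclusion of perfect complexes with homology supported at $x$ into the relevant Verdier quotient is cofinal, which is precisely enough to make $L$ fully faithful and essentially surjective \emph{up to direct summands}. This is the form in which Balmer's classification of the ``stalk layer'' of a codimension-of-support filtration is usually stated, and the equivalence $\simeq$ of the theorem then follows after applying $(\cdot)^{\#}$ to both sides (the right-hand side is already idempotent complete since each $D^{\mathrm{perf}}_{x}(X_j)$ is). The $K_0$ statement is then formal: an equivalence of essentially small triangulated categories induces an isomorphism on $K_0$, and $K_0$ of a coproduct decomposes as the direct sum of the $K_0$'s of the components.

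The main obstacle I anticipate is exactly the passage to the idempotent completion in the middle step: Verdier quotients of triangulated categories are generally not idempotent complete, and one must check, in the spirit of Balmer--Schlichting, that no spurious summands are introduced on the right-hand side when $(\cdot)^{\#}$ is applied. A secondary subtlety is that $X_j$ is non-reduced; this turns out to be harmless because the underlying topological space of $X_j$ coincides with that of $X$, so the codimension-of-support filtration and the associated Balmer spectrum are unaffected by the nilpotent structure, and the purely categorical arguments of Balmer carry over verbatim to this setting.
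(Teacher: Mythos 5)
The paper gives no proof of this statement at all: it is imported verbatim as Balmer's theorem from \cite{B-3}, so the only ``proof'' in the paper is the citation. Your sketch is a correct reconstruction of exactly the argument in that reference (pointwise localization functors killing $\mathcal{L}_{(i-1)}(X_{j})$, Thomason--Neeman cofinality giving an equivalence up to direct summands, and Balmer--Schlichting idempotent completion, with the non-reducedness of $X_{j}$ harmless since the underlying space is that of $X$), so it matches the cited source's approach rather than diverging from anything in the paper itself.
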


We keep the notation of Setting \ref{setting:s} below. For each non-negative integer $j$, let $a_{1}, \cdots, a_{j}$ be arbitrary elements of $O_{X,w}$. We denote by $C_{j}$ the Koszul complex of the sequence $\{f_{1}f_{p+1}+ \varepsilon a_{1}+ \cdots + \varepsilon^{j} a_{j}, f_{2}, \cdots, f_{p}\}$. Since the support of the Koszul complex $C_{j}$ has codimension $p$, we consider $C^{j}$ as an element of $\mathcal{L}_{(-p)}(X_{j})$ which defines an element of $K_{0}((\mathcal{L}_{(-p)}(X_{j})/\mathcal{L}_{(-p-1)}(X_{j}))^{\#})_{\mathbb{Q}}$, denoted $[C_{j}]$.

When $p=1$ and $j=1$, for $X$ a surface, it was proved in Theorem 2.18 of \cite{Y-3} that the Koszul complex of $f_{1}f_{2}+ \varepsilon a_{1}$ defined a Milnor K-theoretic cycle\footnote{For the geometric meaning of Theorem 2.18 of \cite{Y-3}, we refer to page 103-104 and the summary on page 119 of Green-Griffiths \cite{GGtangentspace}. See also page 316-318 of \cite{Y-3} for a summary.}.

It is interesting to generalize Theorem 2.18 of \cite{Y-3} and find more Milnor K-theoretic cycles. Let $Q_{1}=(f_{1}, f_{2}, \cdots, f_{p})$ as in Setting \ref{setting:s} and let $z$ be point given by $Q_{2}=(f_{p+1}, f_{2}, \cdots, f_{p})$ as in Definition \ref{definition: definingCurveZ}. Under the isomorphism (3.1) (let $i=-p$)
\[
 K_{0}((\mathcal{L}_{(-p)}(X_{j})/\mathcal{L}_{(-p-1)}(X_{j}))^{\#})  \simeq \bigoplus\limits_{y \in X^{(p)}}K_{0}(O_{X_{j},y} \ \mathrm{on} \ y), 
\]
the element $[C_{j}]$ decomposes into the direct sum of two complexes $C^{1}_{j}$ and $C^{2}_{j}$
\[
[C_{j}]=C^{1}_{j}+C^{2}_{j},
\]
where $C^{1}_{j}$ and $C^{2}_{j}$ are Koszul resolutions of $(O_{X_{j},w})_{Q_{1}}/(f_{1}+ \varepsilon \dfrac{a_{1}}{f_{p+1}}+ \cdots + \varepsilon^{j}\dfrac{a_{j}}{f_{p+1}}, f_{2}, \cdots, f_{p})$ and $(O_{X_{j},w})_{Q_{2}}/(f_{p+1}+ \varepsilon \dfrac{a_{1}}{f_{1}}+ \cdots + \varepsilon^{j}\dfrac{a_{j}}{f_{1}}, f_{2}, \cdots, f_{p})$ respectively\footnote{The complex $C^{1}_{j}$ should be the Koszul resolution of $(O_{X_{j},w})_{Q_{1}}/(f_{1}+ \varepsilon \dfrac{a_{1}}{f_{p+1}}+ \cdots + \varepsilon^{j}\dfrac{a_{j}}{f_{p+1}}, \dfrac{f_{2}}{f_{p+1}}, \cdots, \dfrac{f_{p}}{f_{p+1}})$. Since $f_{p+1}^{-1}$ exists in $(O_{X_{j},w})_{(f_{1}, \cdots, f_{p})}$, we identify $C^{1}_{j}$ with Koszul resolution of $(O_{X_{j},w})_{Q_{1}}/(f_{1}+ \varepsilon \dfrac{a_{1}}{f_{p+1}}+ \cdots + \varepsilon^{j}\dfrac{a_{j}}{f_{p+1}}, f_{2}, \cdots, f_{p})$. There is a similar explanation for $C^{2}_{j}$.}.

Using isomorphisms $O_{X_{j},y}=(O_{X_{j},w})_{Q_{1}}$ and $O_{X_{j},z}=(O_{X_{j},w})_{Q_{2}}$, one sees that $C^{1}_{j} \in K_{0}(O_{X_{j},y} \ \mathrm{on} \ y)$ and $C^{2}_{j}\in K_{0}(O_{X_{j},z} \ \mathrm{on} \ z)$. Moreover, by Lemma \ref{Lemma: GilletSoule}, one sees that $C^{1}_{j} \in K^{M}_{0}(O_{X_{j},y} \ \mathrm{on} \ y)$ and $C^{2}_{j}\in K^{M}_{0}(O_{X_{j},z} \ \mathrm{on} \ z)$. In particular, when $j=0$, $C^{1}_{0}$ and $C^{2}_{0}$ are Koszul complexes of sequences $\{f_{1}, f_{2},\cdots, f_{p}\}$ and $\{f_{p+1}, f_{2},\cdots, f_{p}\}$ respectively. In other words, $C^{1}_{0}=\mu_{Y}(Y)$ and $C^{2}_{0}=\mu_{Z}(Z)$, where $\mu_{Y}(Y)$ and $\mu_{Z}(Z)$ are defined in Definition \ref{definition: map1} and Remark \ref{remark:muz}.

The following theorem gives a generalization of Theorem 2.18 of \cite{Y-3}.
 \begin{theorem} \label{theorem: TheoremKernel1}
With notation as above, $[C_{j}]=C^{1}_{j}+C^{2}_{j}$ is a Milnor K-theoretic cycle
\[
[C_{j}] =C^{1}_{j}+C^{2}_{j} \in Z^{M}_{p}(D^{\mathrm{Perf}}(X_{j})).
\]
\end{theorem}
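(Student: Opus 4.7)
The plan is to reduce the claim to two facts already in place: the Adams-weight computation for Koszul complexes (Lemma~\ref{Lemma: GilletSoule}) and the vanishing of the Milnor K-theoretic target of $d_{1,X_j}^{p,-p}$ (Theorem~\ref{theorem: firstorder}).

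First I would verify that $[C_j]$ lies inside $\bigoplus_{y' \in X^{(p)}} K_0^M(O_{X_j,y'}\text{ on }y')$. By the Balmer equivalence (Theorem~\ref{theorem: Balmer theorem}), the class $[C_j] \in K_0((\mathcal{L}_{(-p)}(X_j)/\mathcal{L}_{(-p-1)}(X_j))^{\#})_{\mathbb Q}$ decomposes as $C^1_j + C^2_j$, using the isomorphisms $O_{X_j,y} = (O_{X_j,w})_{Q_1}$ and $O_{X_j,z} = (O_{X_j,w})_{Q_2}$ together with the invertibility of $f_{p+1}$ in the first localization and of $f_1$ in the second, exactly as set up in the paragraph preceding the theorem. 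Each summand is then a Koszul resolution of a regular sequence of length $p$, so Lemma~\ref{Lemma: GilletSoule} yields $\psi^{\ell}(C^i_j) = \ell^p\, C^i_j$, and Definition~\ref{definition:Milnor K-theory with support} places $C^1_j \in K_0^M(O_{X_j,y}\text{ on }y)$ and $C^2_j \in K_0^M(O_{X_j,z}\text{ on }z)$.

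It then remains to observe, invoking Theorem~\ref{theorem: firstorder}, that the target of $d_{1,X_j}^{p,-p}$ in diagram~(2.1) vanishes:
\[
\bigoplus_{w \in X^{(p+1)}} K^M_{-1}(O_{X_j,w}\text{ on }w) = 0.
\]
Hence $\mathrm{Ker}(d_{1,X_j}^{p,-p})$ coincides with its whole source $\bigoplus_{y' \in X^{(p)}} K_0^M(O_{X_j,y'}\text{ on }y')$, which by Definition~\ref{definition: Milnor K-theoretic Chow groups} is precisely $Z^M_p(D^{\mathrm{Perf}}(X_j))$. Combined with the previous paragraph, this yields $[C_j]=C^1_j+C^2_j \in Z^M_p(D^{\mathrm{Perf}}(X_j))$. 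The main subtlety is really the prior step — pinning down the decomposition of $[C_j]$ into the two explicit Koszul resolutions at $y$ and $z$ under Balmer localization — because once the Adams-weight computation is in hand, the kernel condition is automatic from the vanishing in Theorem~\ref{theorem: firstorder}.
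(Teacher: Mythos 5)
Your first paragraph is fine, but it reproduces what the paper already sets up \emph{before} the theorem: the decomposition $[C_j]=C^1_j+C^2_j$ under the Balmer isomorphism and the Adams-weight argument via Lemma \ref{Lemma: GilletSoule} placing $C^1_j \in K^M_0(O_{X_j,y}\ \mathrm{on}\ y)$ and $C^2_j \in K^M_0(O_{X_j,z}\ \mathrm{on}\ z)$. The genuine gap is in your second step, where you declare the kernel condition automatic because the bottom-right entry of diagram (2.1) is printed with ``$=0$''. If $\bigoplus_w K^M_{-1}(O_{X_j,w}\ \mathrm{on}\ w)$ were literally zero as a target, then $d^{p,-p}_{1,X_j}$ would be the zero map, $Z^M_p(D^{\mathrm{Perf}}(X_j))=\mathrm{Ker}(d^{p,-p}_{1,X_j})$ would be the whole group $\bigoplus_y K^M_0(O_{X_j,y}\ \mathrm{on}\ y)$, Definition \ref{definition: Milnor K-theoretic Chow groups} would be vacuous, the present theorem would be contentless, and—decisively—the later part of Section 3 would be contradicted: there $\mu_Y(Y^1)$ (in the case $b\in(f_1,\dots,f_p,f_{p+1})$) is shown \emph{not} to be a Milnor K-theoretic cycle because $\partial_1^{p,-p}(\mathrm{Ch}(\mu_Y(Y^1)))\neq 0$, and by commutativity of (2.1) such nonvanishing is impossible if the target of $d^{p,-p}_{1,X_j}$ is zero. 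The operative content of the bottom row of (2.1) is that $\mathrm{Ch}$ is an isomorphism onto $\bigoplus_w H^{p+1}_w((\Omega^{p-1}_{X/\mathbb{Q}})^{\oplus j})$, which is not zero; so membership of $[C_j]$ in $\mathrm{Ker}(d^{p,-p}_{1,X_j})$ is equivalent to the vanishing $\partial_1^{p,-p}\bigl(\mathrm{Ch}(C^1_j)+\mathrm{Ch}(C^2_j)\bigr)=0$, and that is exactly what has to be proved, not assumed.

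That vanishing is the entire content of the paper's proof and is absent from yours. One describes $\mathrm{Ch}(C^1_j)$ and $\mathrm{Ch}(C^2_j)$ via the Ang\'eniol--Lejeune-Jalabert construction, then computes their boundaries $\beta_1,\beta_2$ as classes in $\mathrm{Ext}^{p+1}\bigl(O_{X,w}/(f_1,\dots,f_p,f_{p+1}),(\Omega^{p-1}_{O_{X,w}/\mathbb{Q}})^{\oplus j}\bigr)$ represented on the Koszul resolutions of the length-$(p+1)$ sequences $(f_1,f_2,\dots,f_p,f_{p+1})$ and $(f_{p+1},f_2,\dots,f_p,f_1)$; comparing these two resolutions by the permutation matrix $A_1$ with $\det A_1=-1$ gives $\beta_1=-\beta_2$, hence $\partial_1^{p,-p}(\mathrm{Ch}(C^1_j)+\mathrm{Ch}(C^2_j))=0$, and only then does commutativity of (2.1) yield $d^{p,-p}_{1,X_j}([C_j])=0$. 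Note that this cancellation genuinely needs both summands: neither $C^1_j$ nor $C^2_j$ alone lies in the kernel in general, which is precisely why obstructions arise and why the auxiliary cycle $Z$ is introduced. So the ``main subtlety'' is not the decomposition (which is routine and done before the theorem) but the kernel condition you treated as automatic; as written, your argument does not prove the theorem.
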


We need to show that $[C_{j}]$ lies in the kernel of the differential $d_{1,X_{j}}^{p,-p}$. The strategy is to  use the commutative diagram (2.1) in Theorem \ref{theorem: firstorder}. Concretely, we describe the images of the Koszul complexes $C^{1}_{j}$ and $C^{2}_{j}$ under the map 
\begin{equation}
\mathrm{Ch}: \bigoplus\limits_{y \in X^{(p)}}K^{M}_{0}(O_{X_{j},y} \ \mathrm{on} \ y) \to \bigoplus\limits_{y \in X^{(p)}}H_{y}^{p}((\Omega_{X/\mathbb{Q}}^{p-1})^{\oplus j}),
\end{equation}
and then show that $\mathrm{Ch}(C^{1}_{j})+\mathrm{Ch}(C^{2}_{j})$ lies in the kernel of the differential $\partial_{1}^{p,-p}$.

\begin{proof}
When $j=1$, the map $\mathrm{Ch}$ (3.2) has been described by using a construction of Ang\'eniol and Lejeune-Jalabert \cite{A-LJ}, see Lemma 3.8 of \cite{Y-4}. For general $j$, the map Ch (3.2) can still be described by their construction in a similar way as in the case $j=1$. For our purpose, we sketch the descriptions of $\mathrm{Ch}(C^{1}_{j})$ and $\mathrm{Ch}(C^{2}_{j})$ in the following and refer to page 714-716 of \cite{Y-4} for a summary of Ang\'eniol and Lejeune-Jalabert's construction. 

Let $Q_{1}$ be the ideal $(f_{1},f_{2}, \cdots, f_{p})$ as in Setting \ref{setting:s}. The Koszul resolution of $(O_{X,w})_{Q_{1}}/(f_{1}, f_{2}, \cdots,  f_{p})$, denoted $F_{\bullet}(f_{1},f_{2}, \cdots, f_{p}) $, has the form
\[
 \begin{CD}
  0 @>>> F_{p} @>>> F_{p-1} @>>>  \cdots @>>> F_{0} @>>> 0,
 \end{CD}
\]
where each $F_{i}=\bigwedge^{i}((O_{X,w})_{Q_{1}})^{\oplus p}$. The image $\mathrm{Ch}(C^{1}_{j})$ is  represented by the following diagram,
 {\tiny
 \[
\begin{cases}
 \begin{CD}
   F_{\bullet}(f_{1}, f_{2},\cdots, f_{p}) @>>> (O_{X,w})_{Q_{1}}/(f_{1}, f_{2}, \cdots,  f_{p}) \\
  F_{p}(\cong (O_{X,w})_{Q_{1}}) @>  (\dfrac{a_{1}}{f_{p+1}}+ \cdots + \dfrac{a_{j}}{f_{p+1}})df_{2} \wedge \cdots \wedge df_{p}>> F_{0} \otimes (\Omega_{(O_{X,w})_{Q_{1}}/ \mathbb{Q}}^{p-1})^{\oplus j}(\cong(\Omega_{(O_{X,w})_{Q_{1}}/ \mathbb{Q}}^{p-1})^{\oplus j}),
 \end{CD}
 \end{cases}
\]
}where $d=d_{\mathbb{Q}}$. To be precise, the above diagram gives an element $\alpha$ in $Ext^{p}((O_{X,w})_{Q_{1}}/(f_{1}, f_{2},\cdots, f_{p}), (\Omega^{p-1}_{(O_{X,w})_{Q_{1}}/\mathbb{Q}})^{\oplus j})$. Since
\[
H_{y}^{p}((\Omega^{p-1}_{ X/\mathbb{Q}})^{\oplus j})=\varinjlim_{n \to \infty}Ext^{p}((O_{X,w})_{Q_{1}}/(f_{1}, f_{2},\cdots, f_{p})^{n}, (\Omega^{p-1}_{(O_{X,w})_{Q_{1}}/\mathbb{Q}})^{\oplus j}),
\]
the image $[\alpha]$ of $\alpha$ under the limit is in $H_{y}^{p}((\Omega^{p-1}_{X /\mathbb{Q}})^{\oplus j})$ and it is $\mathrm{Ch}(C^{1}_{j})$.

Recall that $Q_{2}$ is the ideal $(f_{p+1},f_{2}, \cdots, f_{p})$ of $O_{X,w}$. The Koszul resolution of $(O_{X,w})_{Q_{2}}/(f_{p+1}, f_{2}, \cdots,  f_{p})$, denoted $F_{\bullet}(f_{p+1},f_{2}, \cdots, f_{p}) $, has the form
\[
 \begin{CD}
  0 @>>> F'_{p} @>>> F'_{p-1} @>>>  \cdots @>>> F'_{0} @>>> 0,
 \end{CD}
\]
where each $F'_{i}=\bigwedge^{i}((O_{X,w})_{Q_{2}})^{\oplus p}$. The image $\mathrm{Ch}(C^{2}_{j})$ can be described similarly as $\mathrm{Ch}(C^{1}_{j})$ and it is  represented by the following diagram
 {\tiny
\[
\begin{cases}
 \begin{CD}
   F_{\bullet}(f_{p+1}, f_{2},\cdots, f_{p}) @>>> (O_{X,w})_{Q_{2}}/(f_{p+1}, f_{2}, \cdots,  f_{p}) \\
  F'_{p}(\cong (O_{X,w})_{Q_{2}}) @>  (\dfrac{a_{1}}{f_{1}}+ \cdots + \dfrac{a_{j}}{f_{1}})df_{2} \wedge \cdots \wedge df_{p}>> F'_{0} \otimes (\Omega_{(O_{X,w})_{Q_{2}}/ \mathbb{Q}}^{p-1})^{\oplus j}(\cong(\Omega_{(O_{X,w})_{Q_{2}}/ \mathbb{Q}}^{p-1})^{\oplus j}).
 \end{CD}
 \end{cases}
\]
}

Let $F_{\bullet}(f_{1}, f_{2},\cdots, f_{p}, f_{p+1}) $ and $F_{\bullet}(f_{p+1}, f_{2},\cdots, f_{p}, f_{1}) $ be Koszul resolutions of $O_{X, w}/(f_{1}, f_{2}, \cdots,  f_{p}, f_{p+1})$ and $O_{X, w}/(f_{p+1}, f_{2}, \cdots,  f_{p},f_{1})$ respectively. The image $\partial_{1}^{p,-p}(\mathrm{Ch}(C^{1}_{j}))$ is represented by the following diagram (denoted $\beta_{1}$)
{\tiny
\[
\begin{cases}
 \begin{CD}
   F_{\bullet}(f_{1}, f_{2},\cdots, f_{p}, f_{p+1}) @>>> O_{X,w}/(f_{1}, f_{2}, \cdots,  f_{p}, f_{p+1}) \\
  F_{p+1}(\cong O_{X,w}) @> (a_{1}+ \cdots+a_{j}) df_{2} \wedge \cdots \wedge df_{p}>> F_{0} \otimes (\Omega_{O_{X,w}/ \mathbb{Q}}^{p-1})^{\oplus j}(\cong (\Omega_{O_{X,w}/ \mathbb{Q}}^{p-1})^{\oplus j}),
 \end{CD}
 \end{cases}
\]}
and $\partial_{1}^{p,-p}(\mathrm{Ch}(C^{2}_{j}))$  is represented by the following diagram (denoted $\beta_{2}$)
{\tiny
\[
\begin{cases}
 \begin{CD}
   F_{\bullet}(f_{p+1}, f_{2},\cdots, f_{p}, f_{1}) @>>> O_{X, w}/(f_{p+1}, f_{2}, \cdots,  f_{p}, f_{1}) \\
  F_{p+1}(\cong O_{X,w}) @> (a_{1}+ \cdots+a_{j})df_{2} \wedge \cdots \wedge df_{p}>> F_{0} \otimes (\Omega_{O_{X,w}/ \mathbb{Q}}^{p-1})^{\oplus j}(\cong (\Omega_{O_{X,w}/ \mathbb{Q}}^{p-1})^{\oplus j}).
 \end{CD}
 \end{cases}
\]
}

The two complexes $F_{\bullet}(f_{1}, f_{2},\cdots, f_{p}, f_{p+1}) $ and $F_{\bullet}(f_{p+1}, f_{2},\cdots, f_{p}, f_{1}) $ are related by the following commutative diagram (see page 691 of \cite{GH})
\[
\begin{CD}
O_{X,w} @>D_{p+1}>> \wedge^{p}O_{X,w}^{\oplus p+1} @>D_{p}>> \cdots @>>> O_{X,w}^{\oplus p+1} @>D_{1}>> O_{X,w} \\
   @V\mathrm{det}A_{1}VV @V\wedge^{p}A_{1}VV @VVV  @VA_{1}VV @V=VV \\
  O_{X,w} @>E_{p+1}>> \wedge^{p}O_{X,w}^{\oplus p+1} @>E_{p}>> \cdots @>>> O_{X,w}^{\oplus p+1} @>E_{1}>> O_{X,w},
 \end{CD}
\]
where each $D_{i}$ and $E_{i}$ are defined as usual. In particular, $D_{1}=(f_{1}, f_{2},\cdots, f_{p}, f_{p+1})$, 
$E_{1}=(f_{p+1}, f_{2},\cdots, f_{p}, f_{1})$,  and $A_{1}$ is the matrix:
\[
  \left(\begin{array}{cccc}
0 & 0& 0& \cdots 1\\
0 & 1 & 0 & \cdots 0 \\
0 & 0 & 1 & \cdots 0 \\
\hdotsfor{4} \\
1 & 0 & 0 & \cdots 0
  \end{array}\right).
\]

Since the determinant $\mathrm{det}A_{1}=-1$, one has
{\small
\[
\beta_{1}=-\beta_{2} \in Ext^{p+1}(O_{X, w}/(f_{1}, f_{2}, \cdots,  f_{p}, f_{p+1}), (\Omega_{O_{X,w}/ \mathbb{Q}}^{p-1})^{\oplus j}).
\]
}Consequently, $\partial_{1}^{p,-p}(\mathrm{Ch}(C^{1}_{j}))+\partial_{1}^{p,-p}(\mathrm{Ch}(C^{2}_{j}))=0 \in H^{p+1}_{w}(\Omega_{O_{X,w}/ \mathbb{Q}}^{p-1})$. This implies that $d^{p,-p}_{1,X_{j}}(C^{1}_{j}+C^{2}_{j}) = 0$ because of the commutative diagram (2.1)
\[
  \begin{CD}
     \mathrm{Ch}(C^{1}_{j})+\mathrm{Ch}(C^{2}_{j})  @<\mathrm{Ch}<<  C^{1}_{j}+C^{2}_{j} \\
      @V\partial_{1}^{p,-p}VV @Vd_{1,X_{j}}^{p,-p}VV  \\
      \partial_{1}^{p,-p}(\mathrm{Ch}(C^{1}_{j})+\mathrm{Ch}(C^{2}_{j}))=0 @<\mathrm{Ch}< \cong < d_{1,X_{j}}^{p,-p}(C^{1}_{j}+C^{2}_{j}). \\
  \end{CD}
\]

In conclusion, $C^{1}_{j}+C^{2}_{j}$ is a Milnor K-theoretic cycle in the sense of Definition \ref{definition: Milnor K-theoretic Chow groups}.
\end{proof}

For each integer $j$, it is obvious that $g^{*}_{j}([C_{j}])=[C_{j-1}]$, where $g^{*}_{j}$ is the map (2.2). When $j=1$, $g^{*}_{1}([C_{1}])=g^{*}_{1}(C^{1}_{1}+C^{2}_{1})=\mu_{Y}(Y)+\mu_{Z}(Z)$. This shows that
\begin{corollary} \label{corollary:Ckernel}
With notation as above, $[C_{1}] \in Z^{M}_{p}(D^{\mathrm{Perf}}(X_{1}))$ is a first order deformation of $\mu_{Y}(Y)+\mu_{Z}(Z)$ and it can be successively lifted to higher order $[C_{j}] \in Z^{M}_{p}(D^{\mathrm{Perf}}(X_{j}))$.
\end{corollary}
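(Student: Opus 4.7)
The plan is to deduce the corollary directly from Theorem \ref{theorem: TheoremKernel1} together with an explicit computation of the restriction maps $g^{\ast}_{j}$ on the Koszul complexes $C_{j}$. The corollary asserts two things: first, that $[C_{1}]$ restricts on $X_{0}$ to $\mu_{Y}(Y)+\mu_{Z}(Z)$; second, that for every $j\geq 1$ there is a coherent family $[C_{j}]\in Z^{M}_{p}(D^{\mathrm{Perf}}(X_{j}))$ with $g^{\ast}_{j}([C_{j}])=[C_{j-1}]$ in the sense of Definition \ref{definition: deformation}.

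First I would fix, once and for all, the sequence of elements $a_{1},a_{2},\dots\in O_{X,w}$ appearing in the definition of $C_{j}$, and then define for each $j\geq 0$
\[
C_{j} := F_{\bullet}\bigl(f_{1}f_{p+1}+\varepsilon a_{1}+\cdots+\varepsilon^{j}a_{j},\; f_{2},\dots,f_{p}\bigr),
\]
viewed as an object of $\mathcal{L}_{(-p)}(X_{j})$. By Theorem \ref{theorem: TheoremKernel1}, its class $[C_{j}]=C^{1}_{j}+C^{2}_{j}$ already lies in $Z^{M}_{p}(D^{\mathrm{Perf}}(X_{j}))=\mathrm{Ker}(d^{p,-p}_{1,X_{j}})$, so the only remaining point is to verify the compatibility $g^{\ast}_{j}([C_{j}])=[C_{j-1}]$.

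The map $g_{j}\colon X_{j-1}\to X_{j}$ corresponds to the quotient $k[\varepsilon]/\varepsilon^{j+1}\twoheadrightarrow k[\varepsilon]/\varepsilon^{j}$, so $g^{\ast}_{j}$ sets $\varepsilon^{j}=0$. Applied to the regular sequence generating $C_{j}$, this reduction turns $f_{1}f_{p+1}+\varepsilon a_{1}+\cdots+\varepsilon^{j}a_{j}$ into $f_{1}f_{p+1}+\varepsilon a_{1}+\cdots+\varepsilon^{j-1}a_{j-1}$, and leaves $f_{2},\dots,f_{p}$ unchanged; the associated Koszul complex therefore pulls back to $C_{j-1}$. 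Decomposing under the isomorphism (3.1) is compatible with the restriction $g^{\ast}_{j}$, hence $g^{\ast}_{j}(C^{1}_{j})=C^{1}_{j-1}$ and $g^{\ast}_{j}(C^{2}_{j})=C^{2}_{j-1}$, giving $g^{\ast}_{j}([C_{j}])=[C_{j-1}]$. Specialising to $j=1$ and using $C^{1}_{0}=\mu_{Y}(Y)$, $C^{2}_{0}=\mu_{Z}(Z)$ (as recorded in the paragraph preceding Theorem \ref{theorem: TheoremKernel1}), we obtain $g^{\ast}_{1}([C_{1}])=\mu_{Y}(Y)+\mu_{Z}(Z)$, proving the first assertion. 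Iterating, $\{[C_{j}]\}_{j\geq 0}$ is a compatible tower under $g^{\ast}_{j}$, which is exactly the statement that $[C_{1}]$ lifts successively to every higher order $[C_{j}]$.

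There is no real obstacle here: the substantive work (showing that each $[C_{j}]$ is Milnor K-theoretic, i.e.\ that the vertical differentials kill it) is precisely Theorem \ref{theorem: TheoremKernel1}, and the compatibility under $g^{\ast}_{j}$ is visible from the explicit Koszul description. The only mild point of care is to check that the rational decomposition $[C_{j}]=C^{1}_{j}+C^{2}_{j}$ given by localisation at the primes $Q_{1}$ and $Q_{2}$ commutes with the scalar restriction $g^{\ast}_{j}$; this is immediate because $g_{j}$ is an identity on the topological space $X$ and the localisations at $y$ and $z$ are intrinsic to $X$, not to the thickening.
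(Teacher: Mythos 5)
Your proposal is correct and follows the paper's own route: the paper likewise invokes Theorem \ref{theorem: TheoremKernel1} to know each $[C_{j}]$ is a Milnor K-theoretic cycle and then observes (treating it as obvious) that setting $\varepsilon^{j}=0$ sends $C_{j}$ to $C_{j-1}$, so $g^{\ast}_{j}([C_{j}])=[C_{j-1}]$ and $g^{\ast}_{1}([C_{1}])=\mu_{Y}(Y)+\mu_{Z}(Z)$. Your only addition is spelling out the compatibility of the decomposition $[C_{j}]=C^{1}_{j}+C^{2}_{j}$ with $g^{\ast}_{j}$, which is a harmless elaboration of the same argument.
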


For a first order infinitesimal deformation $Y^{1}$ of $Y$, by Definition \ref{definition: map1}, $\mu_{Y}(Y^{1}) \in K_{0}(O_{X_{1},y} \ \mathrm{on} \ y)_{\mathbb{Q}}$ is given by the Koszul complex $F_{\bullet}(f_{1}+\varepsilon g_{1}, \cdots, f_{p}+\varepsilon g_{p})$. We want to check whether $\mu_{Y}(Y^{1})$ is a Milnor K-theoretic cycles or not.

For simplicity, we assume that $g_{2}= \cdots = g_{p}=0$ in the following. In notation of Setting \ref{setting:s}, for $g_{1} \in O_{X,y}= (O_{X,w})_{Q_{1}}$, we write $g_{1}=\dfrac{a}{b} \in O_{X,y}$, where $a,b \in O_{X,w}$ and $b \notin Q_{1}$, then $b$ is either in or not in the maximal idea $(f_{1}, f_{2}, \cdots, f_{p}) \subset O_{X,w}$.

\begin{lemma}  \label{lemma: trivialdeform}
With notation as above, if $b_{1} \notin (f_{1}, f_{2}, \cdots, f_{p})$, then
\begin{itemize}
\item [1.] $\mu_{Y}(Y^{1})$ is a Milnor K-theoretic cycle which lifts $\mu_{Y}(Y)$;
\item[2.] $\mu_{Y}(Y^{1})$ lifts to higher order in $Z^{M}_{p}(D^{\mathrm{Perf}}(X_{j}))$ successively in the sense of Definition \ref{definition: deformation}.
\end{itemize}

\end{lemma}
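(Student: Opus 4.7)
The plan is to identify $\mu_Y(Y^1)$ with the first summand $C^1_1$ in the decomposition $[C_1]=C^1_1+C^2_1$ produced by Theorem \ref{theorem: TheoremKernel1}, for a tailor-made choice of parameter, and to observe that the second summand $C^2_1$ is on its own a Milnor K-theoretic cycle (a trivial lift of $\mu_Z(Z)$). Subtraction will then give both items, and the same argument functions uniformly in $j$.

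Since by hypothesis $b$ is a unit in $O_{X,w}$, the element $a_1:=\dfrac{a f_{p+1}}{b}$ lies in $O_{X,w}$. I apply Theorem \ref{theorem: TheoremKernel1} with this $a_1$ and with $a_i=0$ for $i\geq 2$, obtaining
\[
[C_j]=F_\bullet(f_1 f_{p+1}+\varepsilon a_1, f_2,\ldots,f_p)\in Z^M_p(D^{\mathrm{Perf}}(X_j)),
\]
which decomposes as $C^1_j+C^2_j$ under the isomorphism of Theorem \ref{theorem: Balmer theorem}. By construction $a_1/f_{p+1}=a/b=g_1$ in $O_{X_j,y}$, so $C^1_j$ is the Koszul resolution of $(f_1+\varepsilon g_1, f_2,\ldots,f_p)$ in $O_{X_j,y}$; at $j=1$ this is exactly $\mu_Y(Y^1)$, and at higher $j$ it is its trivial lift. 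For $C^2_j$ I factor $f_{p+1}+\varepsilon a_1/f_1=f_{p+1}\bigl(1+\varepsilon a/(b f_1)\bigr)$; in $O_{X_j,z}$ both $f_1$ and $b$ are units (as $f_1\notin Q_2$ and $b$ was already a unit in $O_{X,w}\subset O_{X,z}$), so $1+\varepsilon a/(b f_1)$ is a unit of the form $1+\text{nilpotent}$. Since multiplying one generator of a Koszul complex by a unit produces an isomorphic complex, $C^2_j\cong F_\bullet(f_{p+1}, f_2,\ldots,f_p)$ in $O_{X_j,z}$, i.e.\ the trivial lift of $\mu_Z(Z)$. By Corollary \ref{corollary:muYZ}, $\mu_Z(Z)\in Z^M_p(D^{\mathrm{Perf}}(X))$, and Remark \ref{Remark:} promotes this to $Z^M_p(D^{\mathrm{Perf}}(X_j))$, so $C^2_j$ is itself a Milnor K-theoretic cycle.

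Combining the above, $C^1_j=[C_j]-C^2_j$ lies in $Z^M_p(D^{\mathrm{Perf}}(X_j))$. Setting $j=1$ gives part (1): $\mu_Y(Y^1)=C^1_1$ is a Milnor K-theoretic cycle, and reducing modulo $\varepsilon$ yields $g_1^\ast(C^1_1)=\mu_Y(Y)$. For part (2), the family $\{C^1_j\}_{j\geq 1}$ furnishes the successive lifts, since $g_j^\ast(C^1_j)=C^1_{j-1}$ is immediate from the Koszul complex $F_\bullet(f_1+\varepsilon g_1, f_2,\ldots,f_p)$ reducing canonically modulo $\varepsilon^j$. The main obstacle is the very first step, arranging that $a_1=a f_{p+1}/b$ actually lies in $O_{X,w}$; this is precisely where the unit hypothesis on $b$ gets used, and without it one would need to realize $\mu_Y(Y^1)$ as a summand of a genuinely different regular-sequence Koszul complex on $O_{X_j,w}$.
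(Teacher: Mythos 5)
Your proposal is correct, but it takes a genuinely different route from the paper. The paper proves the lemma by a direct computation: it forms the Koszul complexes $T^{j}$ of $(f_{1}+\varepsilon g_{1}+\varepsilon^{2}h_{2}+\cdots+\varepsilon^{j}h_{j},f_{2},\ldots,f_{p})$ with \emph{arbitrary} $h_{i}\in O_{X,w}$, describes $\mathrm{Ch}(T^{j})$ via the Ang\'eniol--Lejeune-Jalabert representative, rewrites the coefficient as $(g_{1}+h_{2}+\cdots+h_{j})f_{p+1}/f_{p+1}$, and observes that the resulting class in $Ext^{p+1}$ is trivial because $f_{p+1}$ occurs in the Koszul differential $M_{p+1}$; hence $\partial_{1}^{p,-p}(\mathrm{Ch}(T^{j}))=0$ and diagram (2.1) gives $d_{1,X_{j}}^{p,-p}(T^{j})=0$. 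You instead treat Theorem \ref{theorem: TheoremKernel1} as a black box, specializing it to $a_{1}=af_{p+1}/b$ (legitimate exactly because the hypothesis, as the paper's proof reads it, makes $b$ a unit in $O_{X,w}$) and $a_{i}=0$ for $i\geq 2$, so that $C^{1}_{j}$ is the trivial lift of $\mu_{Y}(Y^{1})$, while the factorization $f_{p+1}+\varepsilon a_{1}/f_{1}=f_{p+1}\bigl(1+\varepsilon a/(bf_{1})\bigr)$ identifies $C^{2}_{j}$, up to isomorphism of complexes, with the base-changed $\mu_{Z}(Z)$, which is killed by $d_{1,X_{j}}^{p,-p}$ by Remark \ref{Remark:}; subtracting inside $\mathrm{Ker}(d_{1,X_{j}}^{p,-p})$ then places $C^{1}_{j}$ in $Z^{M}_{p}(D^{\mathrm{Perf}}(X_{j}))$, and $g_{j}^{*}(C^{1}_{j})=C^{1}_{j-1}$ gives the successive lifts. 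This is in effect Idea \ref{idea: i} run in reverse (add and subtract $\mu_{Z}(Z)$ even in the unobstructed case), and it buys economy: no new Chern character computation is needed. What the paper's computation buys is a slightly stronger and more flexible statement -- every extension with arbitrary higher-order terms $h_{2},\ldots,h_{j}$ is a Milnor K-theoretic cycle, not just the trivial lift -- and a transparent view of where the unit hypothesis enters; note, though, that your method could recover that generality too by taking $a_{i}=h_{i}f_{p+1}$ for $i\geq 2$.
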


\begin{proof}

If $b \notin (f_{1}, f_{2}, \cdots, f_{p}, f_{p+1})$, then $b$ is a unit in $O_{X,w}$, this says $g_{1}= \dfrac{a}{b} \in O_{X,w}$. For each integer $j \geq 1$, let $T^{j}$ denote Koszul resolution of $(O_{X_{j},w})_{Q_{1}}/(f_{1}+ \varepsilon g_{1} + \varepsilon^{2} h_{2} + \cdots + \varepsilon^{j} h_{j}, f_{2}, \cdots, f_{p})$, where $h_{2}, \cdots, h_{j}$ are arbitrary elements of $O_{X,w}$. It is obvious that $T^{j} \in K_{0}(O_{X_{j},y} \ \mathrm{on} \ y)_{\mathbb{Q}}$.

By Lemma \ref{Lemma: GilletSoule}, $T^{j} \in K^{M}_{0}(O_{X_{j},y} \ \mathrm{on} \ y)$. We use the same strategy of proving Theorem \ref{theorem: TheoremKernel1} to prove that $T^{j} \in Z^{M}_{p}(D^{\mathrm{Perf}}(X_{j}))$. As explained in the proof of Theorem \ref{theorem: TheoremKernel1}, the image of $T^{j}$ under the Ch map (3.2), $\mathrm{Ch}(T^{j})$, can be represented by the following diagram
{\tiny
\[
\begin{cases}
 \begin{CD}
   F_{\bullet}(f_{1}, f_{2},\cdots, f_{p}) @>>> (O_{X,w})_{Q_{1}}/(f_{1}, f_{2}, \cdots,  f_{p}) \\
  F_{p}(\cong (O_{X,w})_{Q_{1}}) @>  (g_{1}+h_{2}+\cdots + h_{j})df_{2} \wedge \cdots \wedge df_{p}>> F_{0} \otimes (\Omega_{(O_{X,w})_{Q_{1}}/ \mathbb{Q}}^{p-1})^{\oplus j}(\cong (\Omega_{(O_{X,w})_{Q_{1}}/ \mathbb{Q}}^{p-1})^{\oplus j}).
 \end{CD}
 \end{cases}
\]
}Here $F_{\bullet}(f_{1}, f_{2},\cdots, f_{p})$ is the Koszul resolution of $(O_{X,w})_{Q_{1}}/(f_{1}, f_{2}, \cdots,  f_{p})$. Since $f_{p+1} \notin Q_{1}$, $f_{p+1}^{-1}$ exists in $(O_{X,w})_{Q_{1}}$, $(g_{1}+h_{2+}\cdots +h_{j})df_{2} \wedge \cdots \wedge df_{p}$ can be rewritten as
\[
(g_{1}+h_{2+}\cdots +h_{j})df_{2} \wedge \cdots \wedge df_{p}= \dfrac{(g_{1}+h_{2}+\cdots +h_{j}) f_{p+1}}{f_{p+1}}df_{2} \wedge \cdots \wedge df_{p}.
\]

The image $\partial_{1}^{p,-p}(\mathrm{Ch}(T^{j}))$ is represented by the following diagram (denoted $\gamma$),
{\tiny
\[
\begin{cases}
 \begin{CD}
   F_{\bullet}(f_{1}, f_{2},\cdots, f_{p}, f_{p+1}) @>>> O_{X, w}/(f_{1}, f_{2}, \cdots,  f_{p}, f_{p+1}) \\
  F_{p+1}(\cong O_{X,w}) @>  (g_{1}+h_{2}+\cdots +h_{j})f_{p+1}df_{2} \wedge \cdots \wedge df_{p}>> F_{0} \otimes (\Omega_{O_{X,w}/ \mathbb{Q}}^{p-1})^{\oplus j}(\cong(\Omega_{O_{X,w}/ \mathbb{Q}}^{p-1})^{\oplus j}),
 \end{CD}
 \end{cases}
\]
}where the complex $F_{\bullet}(f_{1}, f_{2},\cdots, f_{p}, f_{p+1})$ is of the form
\[
 \begin{CD}
  0 @>>>  \bigwedge^{p+1}(O_{X, w})^{\oplus p+1} @>M_{p+1}>> \bigwedge^{p}(O_{X, w})^{\oplus p+1} @>>>  \cdots.
 \end{CD}
\]
Let $\{e_{1}, \cdots, e_{p+1} \}$ be a basis of $(O_{X, w})^{\oplus p+1} $, the map $M_{p+1}$ is 
\[
 e_{1}\wedge \cdots \wedge e_{p+1}  \to \sum^{p+1}_{j=1}(-1)^{j}f_{j}e_{1}\wedge \cdots \wedge \hat{e_{j}} \wedge \cdots e_{p+1},
\]
where $\hat{e_{j}}$ means to omit $e_{j}$.

Since $f_{p+1}$ appears in $M_{p+1}$, the diagram $\gamma$ defines a trivial element of $ Ext^{p+1}(O_{X, w}/(f_{1}, \cdots,  f_{p}, f_{p+1}), (\Omega_{O_{X,w}/ \mathbb{Q}}^{p-1})^{\oplus j})$. Hence, $\partial_{1}^{p,-p}(\mathrm{Ch}(T^{j}))=0$. It follows from the commutative diagram (2.1) that $d^{p,-p}_{1,X_{j}}(T^{j}) = 0$. 
This proves that $T^{j} \in Z^{M}_{p}(D^{\mathrm{Perf}}(X_{j}))$. 

It is obvious that $g^{*}_{j}(T^{j})=T^{j-1}$, where $g^{*}_{j}$ is the map (2.2). In particular, $T^{1}= \mu_{Y}(Y^{1})$ and $g^{*}_{1}(T^{1})=g^{*}_{1}(\mu_{Y}(Y^{1}))=\mu_{Y}(Y)$.

\end{proof}

Now, we consider the case $b \in (f_{1}, f_{2}, \cdots, f_{p}, f_{p+1})$. Since $b \notin (f_{1}, f_{2}, \cdots, f_{p})$,
we can write $b= \sum_{i=1}^{p} a_{i}f_{i}^{n_{i}} + a_{p+1}f_{p+1}^{n_{p+1}}$, where $a_{p+1}$ is a unit in 
$O_{X,w}$ and each $n_{j}$ is some integer. For simplicity, we assume that each $n_{j} =1$ and $a_{p+1}=1$.

Let $K^{M}_{0}(O_{X_{1},y} \ \mathrm{on} \ y, \varepsilon)$ denote the kernel of the natural projection
\[
  K^{M}_{0}(O_{X_{1},y} \ \mathrm{on} \ y)  \xrightarrow{\varepsilon=0} 
  K^{M}_{0}(O_{X,y} \ \mathrm{on} \ y).
  \] 
There exists the following isomorphism (see Corollary 9.5 in \cite{DHY} or Corollary 3.11 in \cite{Y-2})
\[
K^{M}_{0}(O_{X_{1},y} \ \mathrm{on} \ y, \varepsilon) \cong H_{y}^{p}(\Omega^{p-1}_{X/\mathbb{Q}}).
 \] It follows that there is an isomorphism
 \begin{equation}
 (\mathrm{P}, \mathrm{Ch}): K^{M}_{0}(O_{X_{1},y} \ \mathrm{on} \ y) \cong K^{M}_{0}(O_{X,y} \ \mathrm{on} \ y) \oplus H_{y}^{p}(\Omega^{p-1}_{X/\mathbb{Q}}),
 \end{equation}
where $\mathrm{P}$ is induced by the map $\varepsilon \to 0$ and $\mathrm{Ch}$ is the map induced by Chern character from K-theory to negative cyclic homology (see Theorem \ref{theorem: firstorder}). 

For $\mu_{Y}(Y^{1})=F_{\bullet}(f_{1}+\varepsilon g_{1}, f_{2},\cdots, f_{p}) \in K^{M}_{0}(O_{X_{1},y} \ \mathrm{on} \ y)$, where $g_{1}=\dfrac{a}{b}$, the image $\mathrm{P}(\mu_{Y}(Y^{1}))=\mu_{Y}(Y) \in K^{M}_{0}(O_{X,y} \ \mathrm{on} \ y)$. The description of the image $\mathrm{Ch}(\mu_{Y}(Y^{1}))$ has been recalled in the proof of Theorem \ref{theorem: TheoremKernel1}. Concretely, let $F_{\bullet}(f_{1}, f_{2},\cdots, f_{p})$ be the Koszul resolution of $(O_{X,w})_{Q_{1}}/(f_{1}, f_{2}, \cdots,  f_{p})$, which is of the form
\[
 \begin{CD}
  0 @>>> F_{p} @>M_{p}>> F_{p-1} @>>>  \cdots @>>> F_{0} @>>> 0,
 \end{CD}
\]
where each $F_{i}=\bigwedge^{i}((O_{X,w})_{Q_{1}})^{\oplus p}$. The map $M_{p}$ is 
\begin{equation}
 e_{1}\wedge \cdots \wedge e_{p}  \to \sum^{p}_{j=1}(-1)^{j}f_{j}e_{1}\wedge \cdots \wedge \hat{e_{j}} \wedge \cdots e_{p},
\end{equation}
where $\{e_{1}, \cdots, e_{p} \}$ is a basis of $((O_{X, w})_{Q_{1}})^{\oplus p} $ and $\hat{e_{j}}$ means to omit $e_{j}$.

The following diagram (denoted $\gamma_{1}$)
\[
\begin{cases}
 \begin{CD}
   F_{\bullet}(f_{1}, f_{2},\cdots, f_{p}) @>>> (O_{X,w})_{Q_{1}}/(f_{1}, f_{2}, \cdots,  f_{p}) \\
  F_{p}(\cong (O_{X,w})_{Q_{1}}) @> \dfrac{a}{b} df_{2} \wedge \cdots \wedge df_{p}>> F_{0} \otimes \Omega_{(O_{X,w})_{Q_{1}}/ \mathbb{Q}}^{p-1}(\cong\Omega_{(O_{X,w})_{Q_{1}}/ \mathbb{Q}}^{p-1}),
 \end{CD}
 \end{cases}
\]
 defines an element of $Ext^{p}((O_{X,w})_{Q_{1}}/(f_{1}, f_{2},\cdots, f_{p}), \Omega^{p-1}_{(O_{X,w})_{Q_{1}}/\mathbb{Q}})$. The limit $[\gamma_{1}] \in H_{y}^{p}(\Omega^{p-1}_{(O_{X,w})_{Q_{1}}/\mathbb{Q}})$ of $\gamma_{1}$ is $\mathrm{Ch}(\mu_{Y}(Y^{1}))$.

Let $F(f_{1}+\varepsilon \dfrac{a_{1}}{f_{p+1}}, f_{2}, \cdots, f_{p})$ be the Koszul resolution of $(O_{X_{1},w})_{Q_{1}}/(f_{1}+\varepsilon \dfrac{a_{1}}{f_{p+1}}, f_{2}, \cdots,  f_{p})$, by Lemma \ref{Lemma: GilletSoule}, it can be considered as an element of $K^{M}_{0}(O_{X_{1},y} \ \mathrm{on} \ y)$. Its images under the map $\mathrm{P}$ is the complex $\mu_{Y}(Y)$ and its image under the map $\mathrm{Ch}$ is the limit $[\gamma_{2}] \in H_{y}^{p}(\Omega^{p-1}_{(O_{X,w})_{Q_{1}}/\mathbb{Q}})$, where $\gamma_{2}$ is the following diagram
 \[
\begin{cases}
 \begin{CD}
   F_{\bullet}(f_{1}, f_{2},\cdots, f_{p}) @>>> (O_{X,w})_{Q_{1}}/(f_{1}, f_{2}, \cdots,  f_{p}) \\
  F_{p}(\cong (O_{X,w})_{Q_{1}}) @> \dfrac{a}{f_{p+1}} df_{2} \wedge \cdots \wedge df_{p}>> F_{0} \otimes \Omega_{(O_{X,w})_{Q_{1}}/ \mathbb{Q}}^{p-1}(\cong\Omega_{(O_{X,w})_{Q_{1}}/ \mathbb{Q}}^{p-1}).
 \end{CD}
 \end{cases}
\]

It follows from the isomorphism (3.3) that
{\small 
\[
\mu_{Y}(Y^{1}) \cong \mu_{Y}(Y)+ [\gamma_{1}], \ F(f_{1}+\varepsilon \dfrac{a_{1}}{f_{p+1}}, f_{2}, \cdots, f_{p}) \cong \mu_{Y}(Y)+ [\gamma_{2}].
\]
}Since $\dfrac{a}{b}-\dfrac{a}{f_{p+1}}=\dfrac{a(-\sum_{i=1}^{p}a_{i}f_{i})}{bf_{p+1}}$ and each $f_{i}$ ($i=1, \cdots, p$) appears in the map $M_{p}$ (3.4), $[\gamma_{1}]= [\gamma_{2}] \in H_{y}^{p}(\Omega^{p-1}_{(O_{X,w})_{Q_{1}}/\mathbb{Q}})$.  This shows that
\begin{lemma} \label{lemma: complexAgree}
The complexes $\mu_{Y}(Y^{1})$ and $F(f_{1}+\varepsilon \dfrac{a_{1}}{f_{p+1}}, f_{2}, \cdots, f_{p})$ define the same element of $K^{M}_{0}(O_{X_{1},y} \ \mathrm{on} \ y)$.
\end{lemma}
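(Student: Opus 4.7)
The plan is to use the isomorphism (3.3) and verify coordinate-by-coordinate that the two Koszul classes $\mu_Y(Y^1)$ and $F(f_1+\varepsilon \frac{a_1}{f_{p+1}}, f_2, \ldots, f_p)$ agree. The $\mathrm{P}$-coordinates match trivially: setting $\varepsilon = 0$ in either complex recovers $F_\bullet(f_1, f_2, \ldots, f_p) = \mu_Y(Y)$. Hence the lemma reduces to proving equality of the $\mathrm{Ch}$-coordinates in $H^p_y(\Omega^{p-1}_{X/\mathbb{Q}})$.

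For the $\mathrm{Ch}$-coordinates I would apply the Ang\'eniol--Lejeune-Jalabert recipe already recalled in the proof of Theorem \ref{theorem: TheoremKernel1}. This represents $\mathrm{Ch}(\mu_Y(Y^1))$ by the Ext cocycle $\gamma_1$ whose top connecting map is $\frac{a}{b}\,df_2 \wedge \cdots \wedge df_p$, and the $\mathrm{Ch}$-image of $F(f_1+\varepsilon \frac{a_1}{f_{p+1}}, f_2, \ldots, f_p)$ by the analogous diagram $\gamma_2$ with top connecting map $\frac{a}{f_{p+1}}\,df_2 \wedge \cdots \wedge df_p$. Both represent classes in $\mathrm{Ext}^p\bigl((O_{X,w})_{Q_1}/(f_1, \ldots, f_p),\, \Omega^{p-1}_{(O_{X,w})_{Q_1}/\mathbb{Q}}\bigr)$ before passing to the local cohomology limit.

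The substantive step, and the one I expect to be the main obstacle, is to prove $[\gamma_1] = [\gamma_2]$ in $H^p_y(\Omega^{p-1}_{X/\mathbb{Q}})$. My approach is to use the hypothesis $b = \sum_{i=1}^p a_i f_i + f_{p+1}$ (coming from $b \in (f_1,\ldots,f_p,f_{p+1})$ with $b \notin (f_1,\ldots,f_p)$ and the simplifying assumptions on the $n_i$ and $a_{p+1}$) to compute the difference of the two connecting maps as
\[
\frac{a}{b} - \frac{a}{f_{p+1}} \;=\; \frac{-a\sum_{i=1}^p a_i f_i}{b\, f_{p+1}}.
\]
The key structural observation is that each $f_i$ with $1 \le i \le p$ appears as a coefficient of the top Koszul differential $M_p$ displayed in (3.4). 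Consequently any Ext-cocycle whose top component has the form $f_i \cdot \eta$ admits an explicit primitive through $M_p$ and therefore represents the zero class.

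Applying this termwise to the decomposition
\[
\tfrac{-a \sum_i a_i f_i}{b f_{p+1}}\,df_2 \wedge \cdots \wedge df_p \;=\; \sum_{i=1}^p f_i \cdot \Bigl(\tfrac{-a\, a_i}{b\, f_{p+1}}\,df_2 \wedge \cdots \wedge df_p\Bigr)
\]
exhibits $\gamma_1 - \gamma_2$ as a coboundary, so $[\gamma_1] = [\gamma_2]$ in $H^p_y(\Omega^{p-1}_{X/\mathbb{Q}})$. Combined with the agreement of $\mathrm{P}$-coordinates, this gives the lemma via (3.3).
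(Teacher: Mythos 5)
Your proposal is correct and follows essentially the same route as the paper: split via the isomorphism (3.3), note the $\mathrm{P}$-components both equal $\mu_Y(Y)$, and compare the $\mathrm{Ch}$-components $[\gamma_1]$, $[\gamma_2]$ by computing $\frac{a}{b}-\frac{a}{f_{p+1}}=\frac{-a\sum_i a_i f_i}{bf_{p+1}}$ and using that each $f_i$ occurs in the top Koszul differential $M_p$ of (3.4) to kill the difference. Your termwise exhibition of the difference as a coboundary just makes explicit the step the paper states in words.
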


It sufficient to assume that $\mu_{Y}(Y^{1})$ is the Koszul complex $F(f_{1}+\varepsilon \dfrac{a_{1}}{f_{p+1}}, f_{2}, \cdots, f_{p})$ in the following. The image $\partial_{1}^{p,-p}(\mathrm{Ch}(\mu_{Y}(Y^{1})))$ is represented by the following diagram
\[
\begin{cases}
 \begin{CD}
   F_{\bullet}(f_{1}, f_{2},\cdots, f_{p}, f_{p+1}) @>>> O_{X,w}/(f_{1}, f_{2}, \cdots,  f_{p}, f_{p+1}) \\
  F_{p+1}(\cong O_{X,w}) @> a df_{2} \wedge \cdots \wedge df_{p}>> F_{0} \otimes \Omega_{O_{X,w}/ \mathbb{Q}}^{p-1}(\cong\Omega_{O_{X,w}/ \mathbb{Q}}^{p-1}),
 \end{CD}
 \end{cases}
\]
which is not trivial. It follows from the commutative diagram (2.1) that $\mu_{Y}(Y^{1})$ is not a Milnor K-theoretic cycles. Hence, $\mu_{Y}(Y^{1})$ is not a deformation of $\mu_{Y}(Y)$. In this way, obstructions to deforming $\mu_{Y}(Y)$ arise.

Now, it is time to use Idea \ref{idea: i} to eliminate the obstructions. Recall that $Z$ is the subscheme defined in Definition ~\ref{definition: definingCurveZ} and $\mu_{Z}(Z) \in Z^{M}_{p}(D^{\mathrm{Perf}}(X))$ is the Koszul complex of the sequence $\{f_{p+1}, f_{2},\cdots, f_{p}\}$. We use $\mu_{Z}(Z)$ to eliminate obstructions to deforming $\mu_{Y}(Y)$.

Since $\mu_{Y}(Y)$ can be written as a formal sum
\[
\mu_{Y}(Y)=(\mu_{Y}(Y)+\mu_{Z}(Z))-\mu_{Z}(Z) \in Z^{M}_{p}(D^{\mathrm{Perf}}(X)),
\] 
to lift $\mu_{Y}(Y)$ is equivalent to lifting $\mu_{Y}(Y)+\mu_{Z}(Z)$ and $\mu_{Z}(Z)$ respectively.

By Corollary \ref{corollary:Ckernel}, the element $[C_{1}]$ is a Milnor K-theoretic cycle and it is a first order deformation of $\mu_{Y}(Y)+\mu_{Z}(Z)$. By Remark \ref{Remark:}, $\mu_{Z}(Z) \in Z^{M}_{p}(D^{\mathrm{Perf}}(X_{1}))$, so the formal sum $[C_{1}]-\mu_{Z}(Z) \in Z^{M}_{p}(D^{\mathrm{Perf}}(X_{1}))$. Since $g^{*}_{1}([C_{1}]-\mu_{Z}(Z))=(\mu_{Y}(Y)+\mu_{Z}(Z))-\mu_{Z}(Z)=\mu_{Y}(Y)$, where $g^{*}_{1}$ is the map (2.2), $[C_{1}]-\mu_{Z}(Z)$ is a first order deformation of $\mu_{Y}(Y)$.

The Milnor K-theoretic cycle $[C_{1}]-\mu_{Z}(Z)$ lies in the direct sum of K-groups
\[
[C_{1}]-\mu_{Z}(Z) \in Z^{M}_{p}(D^{\mathrm{Perf}}(X_{1})) \subset \bigoplus_{y \in X^{(p)}} K^{M}_{0}(O_{X_{1},y} \ \mathrm{on} \ y).
\]
Let $([C_{1}]-\mu_{Z}(Z))|_{Y}$ denote the direct summand corresponding to $Y$ (with generic point $y$) of $[C_{1}]-\mu_{Z}(Z)$, one sees that $([C_{1}]-\mu_{Z}(Z))|_{Y}=\mu_{Y}(Y^1)$.

By Remark \ref{Remark:}, for each integer $j >1$, $\mu_{Z}(Z) \in Z^{M}_{p}(D^{\mathrm{Perf}}(X_{j}))$. According to Corollary \ref{corollary:Ckernel}, the element $[C_{1}] \in Z^{M}_{p}(D^{\mathrm{Perf}}(X_{1}))$ lifts to $[C_{j}] \in Z^{M}_{p}(D^{\mathrm{Perf}}(X_{j}))$ successively. It follows that $[C_{1}]-\mu_{Z}(Z) \in Z^{M}_{p}(D^{\mathrm{Perf}}(X_{1}))$ lifts to $[C_{j}]-\mu_{Z}(Z) \in Z^{M}_{p}(D^{\mathrm{Perf}}(X_{j}))$ successively. In summary,
\begin{theorem} \label{theorem: mainTheorem}
With notation as above, in the case $b \in (f_{1}, \cdots,f_{p+1})$, there exists a Milnor K-theoretic cycle $\mu_{Z}(Z) \in Z^{M}_{p}(D^{\mathrm{Perf}}(X))$, where $Z \subset X$ is another irreducible closed subscheme of codimension $p$, and a Milnor K-theoretic cycle $[C_{1}] \in Z^{M}_{p}(D^{\mathrm{Perf}}(X_{1}))$, which is a first order deformation of $\mu_{Y}(Y) + \mu_{Z}(Z)$ such that
\begin{itemize}
\item [1.] $([C_{1}]-\mu_{Z}(Z))|_{Y}=\mu_{Y}(Y^1)$;
\item [2.] $[C_{1}]-\mu_{Z}(Z)$ is a first order deformation of $\mu_{Y}(Y)$;
\item [3.] $[C_{1}]-\mu_{Z}(Z)$ lifts to higher order successively.
\end{itemize}

\end{theorem}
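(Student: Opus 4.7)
The plan is to implement Idea~\ref{idea: i} by letting $Z$ (the subscheme attached to the ideal $Q_2=(f_{p+1},f_2,\ldots,f_p)$ in Definition~\ref{definition: definingCurveZ}) play the role of the auxiliary cycle that absorbs the obstruction to deforming $\mu_Y(Y)$. This choice is well suited because the combined cycle $\mu_Y(Y)+\mu_Z(Z)$ already admits an unobstructed first order deformation built from a single Koszul complex on $X_1$, namely the one produced by Theorem~\ref{theorem: TheoremKernel1}, while $\mu_Z(Z)$ itself is a Milnor K-theoretic cycle at every order by Remark~\ref{Remark:} and hence can be subtracted off without introducing new obstructions.

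First, I would specialize Theorem~\ref{theorem: TheoremKernel1} by taking the coefficient $a_1=a$, where $g_1=a/b$ with $b=\sum_{i=1}^{p}a_if_i+f_{p+1}$ as in the paragraph preceding Lemma~\ref{lemma: complexAgree}. This produces a Milnor K-theoretic cycle $[C_1]=C^1_1+C^2_1\in Z^M_p(D^{\mathrm{Perf}}(X_1))$. At the generic point $y$ the component $C^1_1$ is the Koszul resolution of $(O_{X_1,w})_{Q_1}/(f_1+\varepsilon a/f_{p+1},f_2,\ldots,f_p)$, which by Lemma~\ref{lemma: complexAgree} coincides with $\mu_Y(Y^1)$ inside $K^M_0(O_{X_1,y}\text{ on }y)$; the component $C^2_1$ at $z$ reduces modulo $\varepsilon$ to $\mu_Z(Z)$. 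By Corollary~\ref{corollary:Ckernel}, $[C_1]$ is a first order deformation of $\mu_Y(Y)+\mu_Z(Z)$ and lifts successively to $[C_j]\in Z^M_p(D^{\mathrm{Perf}}(X_j))$ for all $j\geq 1$.

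Next, I would form the difference $[C_1]-\mu_Z(Z)$; by Remark~\ref{Remark:} it lies in $Z^M_p(D^{\mathrm{Perf}}(X_1))$. Restricting to the summand indexed by $y$ kills $\mu_Z(Z)$, since $\mu_Z(Z)$ is supported at $z\neq y$, and returns $C^1_1=\mu_Y(Y^1)$, giving assertion (1). Applying $g_1^\ast$ yields $g_1^\ast([C_1])-g_1^\ast(\mu_Z(Z))=(\mu_Y(Y)+\mu_Z(Z))-\mu_Z(Z)=\mu_Y(Y)$, giving assertion (2). For (3), the successive lifts $[C_j]$ from Corollary~\ref{corollary:Ckernel} combine with $\mu_Z(Z)\in Z^M_p(D^{\mathrm{Perf}}(X_j))$ to produce the compatible family $[C_j]-\mu_Z(Z)$ of higher order deformations of $\mu_Y(Y)$. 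The only genuinely non-formal point is the identification $C^1_1=\mu_Y(Y^1)$ in the Milnor K-group at $y$; this is precisely the content of Lemma~\ref{lemma: complexAgree}, whose proof exploits the $(\mathrm{P},\mathrm{Ch})$ isomorphism together with the observation that $a/b-a/f_{p+1}$ is divisible by the $f_i$ that already appear in the Koszul differential, so the corresponding Ext-classes at $y$ agree. Everything else is bookkeeping inside the direct sum decomposition of $Z^M_p(D^{\mathrm{Perf}}(X_j))$.
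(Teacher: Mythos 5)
Your proposal is correct and follows essentially the same route as the paper: it assembles $[C_1]$ from Theorem~\ref{theorem: TheoremKernel1} with $a_1=a$, subtracts $\mu_Z(Z)$ using Remark~\ref{Remark:}, identifies the summand at $y$ with $\mu_Y(Y^1)$ via Lemma~\ref{lemma: complexAgree}, and obtains the higher order lifts from Corollary~\ref{corollary:Ckernel}, exactly as in the paper's proof. No gaps noted.
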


\textbf{Acknowledgements.} This paper is an extension of \cite{Y-5}. The author thanks Spencer Bloch, Jerome William Hoffman and Christophe Soul\'e for discussions. He also thanks Shiu-Yuen Cheng and Congling Qiu for help.

The idea to eliminate obstructions to deforming divisors had been known to Mark Green and Phillip Griffiths.


\end{document}